\newtheorem{problem}{\bf Problem}[section]
\newtheorem{theorem}{\bf Theorem}[section]
\newtheorem{definition}[theorem]{\bf Definition}
\newtheorem{exam}[theorem]{\bf Example}
\newtheorem{remark}[theorem]{\bf Remark}
\newtheorem{lemma}[theorem]{\bf Lemma}
\def \R{{\mathbb R}}
\def \C{{\mathbb C}}
\def \rank{\mathrm{rank}}
\def \vec{\mathrm{vec}}
\def \QR{\mathbb{Q}_\mathbb{R}}
\def \i{\textit{\textbf{i}}}
\def \j{\textit{\textbf{j}}}
\def \k{\textit{\textbf{k}}}
\newcommand\norm[1]{\left\lVert#1\right\rVert}
\newcommand{\Rn}[1]{%
	\textup{\lowercase\expandafter{\romannumeral#1}}%
}
\def \ov{\overrightarrow}
\def \diag{\mathrm{diag}}
\def \noin{\noindent}
\newcommand{\beano}{\begin{eqnarray*}}	\newcommand{\eeano}{\end{eqnarray*}}
\renewcommand{\thefootnote}{\fnsymbol{footnote}}
\date{\today}
\title{L-structure least squares solutions of reduced biquaternion matrix equations with applications 
	
	\footnotemark[2]}
\author{Sk. Safique Ahmad\footnotemark[1] \and Neha Bhadala \footnotemark[3]}
\begin{document}
	\maketitle
	\begin{abstract}
		This paper presents a framework for computing the structure-constrained least squares solutions to the generalized reduced biquaternion matrix equations (RBMEs). The investigation focuses on three different matrix equations: a linear matrix equation with multiple unknown L-structures, a linear matrix equation with one unknown L-structure, and the general coupled linear matrix equations with one unknown L-structure. Our approach leverages the complex representation of reduced biquaternion matrices. To showcase the versatility of the developed framework, we utilize it to find structure-constrained solutions for complex and real matrix equations, broadening its applicability to various inverse problems. Specifically, we explore its utility in addressing partially described inverse eigenvalue problems (PDIEPs) and generalized PDIEPs. Our study concludes with  numerical examples. 
	\end{abstract}
	
	\noindent {\bf Keywords.} Kronecker product, least squares problem, matrix equation, inverse problem, reduced biquaternion matrix, Moore-Penrose generalized inverse.
	
	\noindent {\bf AMS subject classification.} 15A22, 15B05, 15B33, 65F18, 65F45.
	
	\renewcommand{\thefootnote}{\fnsymbol{footnote}}
	
	\footnotetext[1]{
		Department of Mathematics, Indian Institute of Technology Indore, Simrol, Indore-452020, Madhya Pradesh, India. \texttt{email:safique@iiti.ac.in, safique@gmail.com}.}
	
	\footnotetext[3]{Research Scholar, Department of Mathematics, IIT Indore, Research work funded by PMRF (Prime Minister's Research Fellowship). \texttt{email:phd1901141004@iiti.ac.in, bhadalaneha@gmail.com}}
	
	\section{Introduction} \label{Intro}
	In matrix theory, linear matrix equations play a crucial role. They appear widely in control theory, inverse problems, and linear optimal control \cite{ datta2004numerical, gantmacher2005applications, jameson1973inverse}. Owing to their widespread application in various fields, one encounters the problem of finding approximate solutions for linear matrix equations. There are many different forms of matrix equations. Some simple examples of these are:
	\begin{equation*}
		AX= B, \; \; \; AXB+ CX^TD= E, \; \; \; AXB+ CYD= E.
	\end{equation*}
	There have been several studies on real and complex matrix equations. See \cite{chu1987singular, flanders1977matrix, jiang2003solutions, magnus1983structured} and references therein. We now turn our attention to quaternion and reduced biquaternion matrix equations. 
	
	In $1843$, Hamilton first introduced the notion of quaternions. Several aspects of quantum physics, image processing, and signal processing rely on the quaternion matrix equations and their general solutions \cite{ bulow2001hypercomplex, yuan2013solutions, zhang2015special}. Quaternion matrix equations have been the subject of several studies in the literature (for example, \cite{liping1996matrix, yuan2016structured, yuan2011least}). Quaternion multiplication is not commutative, which limits its use in many vital applications.  Following Hamilton's discovery of quaternions, Segre introduced reduced biquaternions, which are commutative in nature. Reduced biquaternions are also known as commutative quaternions. Commutative multiplication simplifies numerous operations. For instance, Pei et al. \cite{pei2004commutative, pei2008eigenvalues} demonstrated how reduced biquaternions outperform conventional quaternions in image and digital signal processing. Additionally, reduced biquaternions allow us to treat three or four-dimensional vectors as one entity, facilitating efficient information processing. Due to this, it becomes imperative to learn how to solve the matrix equations arising from commutative quaternionic theory. Some studies in the literature have focused on reduced biquaternion matrix equations (RBMEs). For instance, Yuan et al. \cite{yuan2020hermitian} discussed the Hermitian solution of the RBME $(AXB, CXD)= (E, G)$. Zhang et al. \cite{zhang2020algebraic} investigated the least squares solutions to the matrix equations $AXC= B$ and $AX= B$. This paper focuses on the least squares structured solutions to the generalized RBMEs. Our framework encompasses all those structures where any set of linear relationships between the matrix entries is allowed. A class of such matrices is called reduced biquaternion L-structure. Surprisingly, the least squares Toeplitz, symmetric Toeplitz, Hankel, and circulant solutions of the RBME have not been discussed in the literature despite their significance in scientific computing, inverse problems, image restoration, and signal processing  \cite{carrasquinha2018image, nandhini2019compressive,zhang2019correction}. Given the above context, our interest lies in least squares L-structure solutions for generalized RBMEs, with specific attention to reduced biquaternion Toeplitz, symmetric Toeplitz, Hankel, and circulant solutions. This manuscript addresses the following generalized matrix equations: 
\begin{eqnarray} 
		&&\sum_{l=1}^{r}A_lX_lB_l= E,    \label{eq2}\\ 
	    &&	\sum_{l=1}^{r}A_lXB_l+ \sum_{p=1}^{q}C_pX^TD_p= E , \label{eq1}   \\ 
         && 	(A_1XB_1, A_2XB_2, \ldots, A_rXB_r)= (E_1, E_2, \ldots, E_r).  \label{eq3}          
\end{eqnarray}    
Moreover, this paper elucidates a range of applications for the proposed framework in solving inverse eigenvalue problems. Several applications of the inverse eigenvalue problem, which involves reconstructing matrices from prescribed spectral data, deal with structured matrices. When the spectral data contain only partial information about the eigenpairs, this kind of inverse problem is called a partially described inverse eigenvalue problem (PDIEP). In  both PDIEP and generalized PDIEP, two pivotal questions arise: the theory of solvability and the numerical solution methodology (see textbook \cite{chu2005inverse} and references therein). In the context of solvability, one major challenge has been identifying necessary or sufficient conditions for a PDIEP and a generalized PDIEP to be solvable. On the other hand, numerical solution methods aim to develop procedures that construct a matrix in a numerically stable manner when the given spectral data is feasible. In this paper, we have successfully developed a numerical solution methodology for both PDIEP and generalized PDIEP by employing our developed framework. Our attention is primarily directed toward two structures, namely Hankel and symmetric Toeplitz. In summary, the primary applications discussed in this article encompass:
	\begin{itemize}
	    \item We utilize our developed framework to determine structure-constrained solutions for complex and real matrix equations. This is possible because these matrix equations represent a special case of RBMEs. It enables us to tackle various inverse eigenvalue problems. Using our framework, we offer a solution to PDIEP \cite[Problem 5.1 and  5.2]{chu2005inverse}, which involves constructing a structured matrix from an eigenpair set.
	    \item We provide a framework for solving generalized PDIEP for symmetric Toeplitz and Hankel structure.
	\end{itemize}
	
	The manuscript is organized as follows. Section \ref{sec2} presents the notation and preliminary results. In Section \ref{sec3}, we first define the concept of reduced biquaternion L-structures and examine some reduced biquaternion L-structure matrices. Next, we present several useful lemmas. Section \ref{sec4} outlines the general framework for solving the RBMEs. Subsection \ref{sec4.2} delves into solving the RBME with multiple unknown L-structures. In Section \ref{sec5}, we apply the developed framework from Section \ref{sec4} to specific cases and explore their practical implications. Finally, Section \ref{sec6} provides the numerical verification of our developed results. 
	\section{Notation and preliminaries} \label{sec2}
	\subsection{Notation}  \label{sec2.1}
	Throughout this paper, we denote $\QR$ as the set of all reduced biquaternions. $\R^{m \times n}$, $\C^{m \times n}$, and $\QR^{m \times n}$ denote the sets of all $m \times n$ real, complex, and reduced biquaternion matrices, respectively. Denote $\mathbb{T}\R^{n \times n}$, $\mathbb{S}\mathbb{T}\R^{n \times n}$, $\mathbb{H}\R^{n \times n}$, $\C\R^{n \times n}$, $\mathbb{H}\C^{n \times n}$, $\mathbb{T}\QR^{n \times n}$, $\mathbb{S}\mathbb{T}\QR^{n \times n}$, $\mathbb{H}\QR^{n \times n}$, and $\C\QR^{n \times n}$ as the sets of all $n \times n$ real Toeplitz, real symmetric Toeplitz, real Hankel, real circulant, complex Hankel, reduced biquaternion Toeplitz, reduced biquaternion symmetric Toeplitz, reduced biquaternion Hankel, and reduced biquaternion circulant matrices, respectively. For $A \in \R^{m \times n}$, the notations $A^{+}$ and $A^T$ denote the Moore-Penrose generalized inverse and the transpose of $A$. For $A \in \C^{m \times n}$, the notations $\Re(A)$ and $\Im(A)$ stand for the real and imaginary parts of $A$, respectively. For a diagonal matrix $A=(a_{ij}) \in \QR^{n \times n}$, we denote it as $\diag(\alpha_1,\alpha_2,\ldots,\alpha_n)$, where $a_{ij}=0$ whenever $i \neq j$ and $a_{ii}=\alpha_i$ for $i=1,\ldots,n$. $I_{n}$ represents the identity matrix of order $n$. For $i=1, 2, \ldots, n$, $e_i$ denotes the $ith$ column of the identity matrix $I_n$. $0$ denotes the zero matrix of suitable size. $A \otimes B= (a_{ij}B)$ represents the Kronecker product of matrices $A \; \mbox{and} \; B$. For matrix $A= (a_{ij}) \in \QR^{m \times n}$, $\vec(A)= \left[a_1, a_2, \ldots, a_n\right]^T$, where $a_{j}=\left[a_{1j}, a_{2j}, \ldots, a_{mj}\right] \; \mbox{for} \; j= 1,  2, \ldots, n$. $\norm{\cdot}_F$ represents the Frobenius norm. $\norm{\cdot}_2$ represents the $2$-norm or Euclidean norm. For $A \in \QR^{m \times n_1}$ and $B \in \QR^{m \times n_2}$, the notation $\left[A, B\right]$ represents the matrix $\begin{bmatrix}A & B\end{bmatrix} \in \QR^{m \times (n_1 + n_2)}$.
	
	The Matlab command $rand(m, n)$ and $ones(m, n)$ return an $m\times n$ random matrix and a matrix with all entries one, respectively. Let $u$ and $v$ be row vectors of size $n$. Matlab command $toeplitz(u, v)$ returns a Toeplitz matrix with $u$ as its first column and $v$ as its first row. Matlab command $toeplitz(u)$ returns a symmetric Toeplitz matrix with $u$ as its first column and first row. Matlab command $hankel(u, v)$ creates a Hankel matrix with $u$ and $v$ as its first column and last row, respectively.
	 We use the following abbreviations throughout this paper:\\
	RBME : reduced biquaternion matrix equation,
	PDIEP : partially described inverse eigenvalue problem.
	\subsection{Preliminaries}  \label{sec2.2}
	A reduced biquaternion can be expressed uniquely as  $r= r_{0}+ r_{1}\i+ r_{2}\j+ r_{3}\k$,
	where $r_{i} \in \R$ for $i= 0, 1, 2, 3$, and $\i^2=  \k^2= -1, \; \j^2= 1$,
	$\i\j= \j\i= \k, \; \j\k= \k\j= \i, \; \k\i= \i\k= -\j$. It can also be expressed as $r= d_1+ d_2\j$, where $d_1= r_0+ r_1\i$ and $d_2= r_2+ r_3\i$ are complex numbers. The conjugate of $r$, denoted as $\bar{r}$, is given by $\bar{r}= r_0- r_1\i -r_2\j- r_3\k$. The norm of $r$ is $\norm{r}= \sqrt{r_0^2+ r_1^2+ r_2^2+ r_3^2}.$
	We have
	\[
	\|r\|^2 \neq r \bar{r}.
	\]
	In addition, we identify $r= d_1+ d_2\j \in \QR$ using a complex vector $\Psi_{r}= [d_1, d_2] \in \C^{1 \times 2}$. Similarly, we identify any reduced biquaternion matrix $Z= Z_1+ Z_2\j \in \QR^{m \times n}$, where $Z_1, Z_2 \in \C^{m \times n}$, using a complex matrix $\Psi_Z= \left[Z_1, Z_2\right]\in \C^{m \times 2n}$. The Frobenius norm for $Z= (z_{ij}) \in \QR^{m \times n}$ is defined as follows:
	\[
	\norm{Z}_F= \sqrt{\sum_{i= 1}^{m} \sum_{j= 1}^{n} \norm{z_{ij}}^{2}}.
	\]
	We have
	\[
	\norm{Z}_F= \norm{\Psi_{Z}}_F= \sqrt{\norm{Z_1}_F^2+ \norm{Z_2}_F^2}= \sqrt{\norm{\Re(Z_1)}_F^2+ \norm{\Im(Z_1)}_F^2+ \norm{\Re(Z_2)}_F^2+ \norm{\Im(Z_2)}_F^2}.
	\]
	The complex representation of matrix $Z= Z_1+ Z_2\j \in \QR^{m \times n}$, denoted as $h(Z)$, is defined as follows:
	\begin{equation*} 
		h(Z)=\begin{bmatrix}
			Z_1 & Z_2\\
			Z_2 & Z_1                    
		\end{bmatrix}.                         
	\end{equation*}
	For $Y \in \QR^{m \times n}$ and $Z \in \QR^{n \times p}$, we have
	\begin{equation} \label{eq4}
		h(YZ)= h(Y)h(Z).
	\end{equation}
	For $q= q_1+ q_2\j \in \QR$ and $Y= Y_1+ Y_2\j \in \QR^{m \times n}$, $\Psi_{qY}$ can be expressed as
	\begin{equation*}
		\Psi_{qY}= \left[q_1Y_1+ q_2Y_2, q_1Y_2+ q_2Y_1\right]= \left[q_1,q_2\right]\begin{bmatrix}
			Y_1 & Y_2\\
			Y_2 & Y_1
		\end{bmatrix}= \Psi_{q}h(Y).
	\end{equation*}
	For $\alpha \in \R$, $Y= Y_1+ Y_2\j$, and $Z= Z_1+ Z_2\j$, we have  $\Psi_{\alpha Y}= \alpha \Psi_Y$, $\Psi_{Y+ Z}= \Psi_Y+ \Psi_Z$,
	and
	\begin{equation}\label{eq5}
		\Psi_{YZ}= \left[Y_1Z_1+ Y_2Z_2, Y_1Z_2+ Y_2Z_1\right]= \left[Y_1, Y_2\right] \begin{bmatrix}
			Z_1 & Z_2\\
			Z_2 & Z_1
		\end{bmatrix}= \Psi_{Y}h(Z).
	\end{equation}
	Furthermore, the operator $\vec(Y)$ is linear, which means that
	$\vec(Y+Z)= \vec(Y)+ \vec(Z) \,\mbox{and}\,\vec(\alpha Y)= \alpha \vec(Y).$
	For $Z= Z_1+ Z_2\j$, we have
	\begin{equation}	\label{eq6}
		\vec(Z)= \vec(Z_1)+ \vec(Z_2)\j \; \;  \mbox{and} \;
		\vec(\Psi_Z)= 
		\begin{bmatrix}
			\vec(Z_1) \\ 
			\vec(Z_2)
		\end{bmatrix}.
	\end{equation}
	Let $Z=Z_1+ Z_2\j \in \QR^{m \times n}$ and denote $\ov{Z}= \left[\Re(Z_1), \Im(Z_1), \Re(Z_2), \Im(Z_2)\right] \in \R^{m \times 4n}$. We have
	$$
	\vec(\ov{Z})= \begin{bmatrix}
		\vec(\Re(Z_1)) \\
		\vec(\Im(Z_1)) \\
		\vec(\Re(Z_2)) \\
		\vec(\Im(Z_2))
	\end{bmatrix}. $$
	Clearly,
	\begin{equation} \label{eq7}
		\norm{Z}_F= \norm{\Psi_Z}_F= \norm{\vec(\Psi_Z)}_F= \norm{\vec(\ov{Z})}_F.
	\end{equation}
	\section{Reduced biquaternion L-structure matrices} \label{sec3}
	This section aims to define the concept of reduced biquaternion L-structure and explore some specific examples of this class of matrices. A reduced biquaternion L-structure refers to the set of all reduced biquaternion matrices of a given order whose entries adhere to specific linear constraints. A notable example of this class includes unstructured matrices, where no linear restrictions are placed on the matrix entries. The subsequent definition offers a formalized explanation of this concept.
	\begin{definition}
		Let $\Omega$ be a subspace of $\QR^{mn}$. The subset of reduced biquaternion matrices of order $m \times n$ given by
		\begin{equation} \label{eq8}
			L(m,n)= \{X \in \QR^{m \times n} | \vec(X) \in \Omega\}
		\end{equation}
		is known as the reduced biquaternion L-structure. 
	\end{definition}
	\begin{remark}
		$\QR$ and $\QR^n$ are vector spaces over $\R$ with dimensions $4$ and $4n$, respectively.
	\end{remark}
	To better comprehend the above definition, let us consider the following examples.  
	\begin{exam}
		Let $A= \begin{bmatrix}
			0 & 0 & 0 & 1 & 0 & 0 & 0 & 0 & 0\\
			0 & 0 & 0 & 0 & 0 & 0 & 1 & 0 & 0\\
			0 & 1 & 0 & 0 & 0 & 0 & 0 & 0 & 0\\
			0 & 0 & 0 & 0 & 0 & 0 & 0 & 1 & 0\\
			0 & 0 & 1 & 0 & 0 & 0 & 0 & 0 & 0\\
			0 & 0 & 0 & 0 & 0 & 1 & 0 & 0 & 0\\
		\end{bmatrix}$ and $\Omega_{1}= \{v \in \QR^{9 \times 1} \; | \; Av= 0\}$. Clearly, $\Omega_{1}$ is a subspace of $\QR^{9 \times 1}$. The resulting reduced biquaternion L-structure is as follows:
		$$L(3,3)= \{X \in \QR^{3 \times 3} \; | \; \vec(X) \in \Omega_{1}\}.$$
		The subset above represents the class of diagonal matrices of size $3 \times 3$. In this case, six linear restrictions are imposed on the entries of matrix $X= (x_{ij}) \in \QR^{3 \times 3}$, given by $x_{ij}= 0$ for $i \neq j$. Hence, the collection of all reduced biquaternion diagonal matrices of a given order falls under the class of reduced biquaternion L-structure.
	\end{exam}   
 Other reduced biquaternion L-structure examples include the set of all reduced biquaternion Toeplitz, symmetric Toeplitz, Hankel, circulant, lower triangular, and upper triangular matrices of a given order. These classes of matrices consider only equality relationships between the matrix entries. Here is an example of a reduced biquaternion L-structure with some linear relationships between the matrix entries.
	\begin{exam}
		Let $B= \begin{bmatrix}
			1 & -1 & 1 & 0 & 0 & 0 & 0 & 0 & 0\\
			0 & 0 & 0 & 1 & 1 & -1 & 0  &  0 & 0\\
			0 & 0 & 0 & 0 & 0 & 0 & 1 & -1 & 1
		\end{bmatrix}$ and $\Omega_{2}= \{v \in \QR^{9 \times 1} \; | \; Bv= 0\}$. Clearly, $\Omega_{2}$ is a subspace of $\QR^{9 \times 1}$. The resulting reduced biquaternion L-structure is as follows:
		$$L(3,3)= \{X \in \QR^{3 \times 3} \; | \; \vec(X) \in \Omega_{2}\}.$$
		The above subset represents a collection of all reduced biquaternion matrices $X= (x_{ij}) \in \QR^{3 \times 3}$ with the following linear restrictions imposed on the entries of matrix $X$: $x_{11}+ x_{31}= x_{21}$, $x_{12}+ x_{22}= x_{32}$, and $x_{13}+ x_{33}= x_{23}$.
	\end{exam}   
	\noin The remaining section focuses on some reduced biquaternion L-structure matrices that frequently appear in practical applications. Our primary focus lies on  {\it reduced biquaternion Toeplitz, symmetric Toeplitz, Hankel, and circulant matrices.} To commence our exploration, we initially examine the vec-structure of some real structured matrices. 
	\begin{definition}
		A matrix $X \in \R^{n \times n}$ is Toeplitz if it has the following form:
		\[
		X=  \begin{bmatrix}
			x_{0}      & x_{1}   & x_{2}  &\cdots & \cdots  & x_{n-1} \\
			x_{-1}     & x_{0}  & x_{1}   &\ddots &             & \vdots  \\
			x_{-2}    & x_{-1} &\ddots  &\ddots &\ddots  & \vdots  \\
			\vdots    & \ddots&\ddots  &\ddots & x_{1}    & x_{2}   \\
			\vdots    &            &\ddots  &x_{-1}  & x_{0}   & x_{1}   \\
			x_{-n+1}& \cdots&\cdots  &x_{-2}  & x_{-1}  & x_{0}
		\end{bmatrix}.
		\]
		For $X \in \R^{n \times n}$, denote by $\vec_T(X)$ the following vector:
		\begin{equation}\label{eq9}
			\vec_T(X)= \left[x_{-n+ 1}, x_{-n+ 2}, \ldots, x_{-1}, x_{0}, x_{1}, \ldots, x_{n-1}\right]^T \in \R^{2n-1}.     
		\end{equation}
	\end{definition} 
	\begin{definition}
	A matrix $X \in \R^{n \times n}$ is symmetric Toeplitz if it has the following form:
	\[
	X=  \begin{bmatrix}
		x_{0}      & x_{1}   & x_{2}  &\cdots & \cdots  & x_{n-1} \\
		x_{1}     & x_{0}  & x_{1}   &\ddots &             & \vdots  \\
		x_{2}    & x_{1} &\ddots  &\ddots &\ddots  & \vdots  \\
		\vdots    & \ddots&\ddots  &\ddots & x_{1}    & x_{2}   \\
		\vdots    &            &\ddots  &x_{1}  & x_{0}   & x_{1}   \\
		x_{n-1}& \cdots&\cdots  &x_{2}  & x_{1}  & x_{0}
	\end{bmatrix}.
	\]
	For $X \in \R^{n \times n}$, denote by $\vec_{ST}(X)$ the following vector:
	\begin{equation}\label{eq3.1}
		\vec_{ST}(X)= \left[x_{0}, x_{1}, x_{2}, \ldots, x_{n-1}\right]^T \in \R^{n}.
	\end{equation}
\end{definition} 
	\begin{definition}
		A matrix $X \in \R^{n \times n}$ is Hankel if it has the following form:
		\[
		X=  \begin{bmatrix}
			x_{n-1} & \cdots& \cdots &x_{2}  & x_{1}   & x_{0} \\
			\vdots  &            &\udots  &x_{1 } & x_{0}   & x_{-1} \\
			\vdots  & \udots&\udots  &\udots & x_{-1}  & x_{-2} \\
			x_{2}   & x_{1}   &\udots  &\udots & \udots  & \vdots  \\
			x_{1}   & x_{0}   &x_{-1}  &\udots &              & \vdots   \\
			x_{0}   & x_{-1} &x_{-2}  &\cdots & \cdots  &  x_{-n+1}
		\end{bmatrix}.
		\]
		For $X \in \R^{n \times n}$, denote by $\vec_H(X)$ the following vector:
		\begin{equation}\label{eq10}
			\vec_H(X)=  \left[x_{n-1},  x_{n-2}, \ldots, x_{1}, x_{0}, x_{-1}, \ldots, x_{-n+ 1}\right]^T \in \R^{2n-1}.
		\end{equation}
	\end{definition} 
	\begin{definition}
		A matrix $X \in \R^{n \times n}$ is circulant if it has the following form:
		\[
		X=  \begin{bmatrix}
			x_{0}   & x_{n-1}&\cdots& x_{2}  & x_{1} \\
			x_{1}   & x_{0}    &x_{n-1}&           & x_{2}  \\
			\vdots   & x_{1}    &x_{0} &\ddots  & \vdots  \\
			x_{n-2}   &               &\ddots  &\ddots & x_{n-1}   \\
			x_{n-1} & x_{n-2}&\cdots   & x_{1}     & x_{0}
		\end{bmatrix}.
		\]
		For $X \in \R^{n \times n}$, denote by $\vec_C(X)$ the following vector:
		\begin{equation}\label{eq11}
			\vec_C(X)= \left[x_{0}, x_{1}, x_{2}, \ldots, x_{n-1}\right]^T \in \R^{n}.
		\end{equation}
	\end{definition} 
In the following four lemmas, we  describe the structure of some particular classes of real matrix sets. 
	\begin{lemma} \label{lem3.9}
		If $X \in \R^{n \times n}$, then $X \in \mathbb{T}\R^{n \times n} \iff \vec(X)=  K_T \vec_T(X),$
		where $\vec_T(X)$ is of the form \eqref{eq9}, and the matrix $K_T \in \R^{n^2 \times 2n-1}$ is represented as
		\begin{equation*}
			K_T= \begin{bmatrix}
				e_n& e_{n-1}& e_{n-2} & \cdots & e_2    & e_1      & 0         & \cdots& 0       & 0 \\
				0    & e_n      & e_{n-1}&\cdots &  e_3         & e_2     & e_1      & \cdots& 0        & 0 \\
				\vdots   & \vdots  & \vdots &           &\vdots       &  \vdots &            & & \vdots&\vdots\\
				0   &0            &0          &  \cdots & e_n   &e_{n-1} & \cdots&e_2 &e_1     &0 \\
				0   &0            & 0          &   \cdots & 0      & e_n     &e_{n-1}   &\cdots &e_2     &e_1
			\end{bmatrix}.
		\end{equation*} 
	\end{lemma}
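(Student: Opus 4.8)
The plan is to prove both implications by a direct entrywise comparison, exploiting the block structure of $K_T$. First I would fix the bookkeeping convention behind $\vec_T$: for a general $X=(x_{ij})\in\R^{n\times n}$ the vector $\vec_T(X)$ in \eqref{eq9} is read off by listing the first column of $X$ from bottom to top and then the first row from left to right, so that the $p$th entry of $\vec_T(X)$ is the scalar lying on the $(p-n)$th diagonal of $X$; in particular position $1$ holds the bottom-left entry, position $n$ holds $x_0=x_{11}$, and position $2n-1$ holds the top-right entry. With this reading $\vec_T$ is defined on all of $\R^{n\times n}$ and coincides with the definition when $X$ is Toeplitz.

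Next I would unpack $K_T$. Partition it into $n$ consecutive block rows $K_T^{(1)},\dots,K_T^{(n)}\in\R^{n\times(2n-1)}$, one per column of $X$ under $\vec$. Reading off the displayed pattern, $K_T^{(j)}=\bigl[\,0_{n\times(j-1)}\;\; \widehat J\;\; 0_{n\times(n-j)}\,\bigr]$, where $\widehat J=[e_n,e_{n-1},\dots,e_1]$ is the $n\times n$ exchange matrix; equivalently $K_T^{(j)}$ carries the unit vector $e_i$ in its $(n+j-i)$th column for $i=1,\dots,n$ and is zero elsewhere. Since the index $n+j-i$ ranges exactly over $\{1,\dots,2n-1\}$ as $i,j$ run over $\{1,\dots,n\}$, it follows that $K_T^{(j)}\vec_T(X)=\sum_{i=1}^{n}\bigl(\vec_T(X)\bigr)_{n+j-i}\,e_i$, i.e. the $i$th entry of the $j$th block of $K_T\vec_T(X)$ equals $\bigl(\vec_T(X)\bigr)_{n+j-i}$, while the $j$th block of $\vec(X)$ is the $j$th column $[x_{1j},\dots,x_{nj}]^T$.

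For the forward implication, assume $X\in\mathbb T\R^{n\times n}$, so $x_{ij}=x_{j-i}$ with the diagonal constants $x_{-n+1},\dots,x_{n-1}$ as in the definition. Then $\bigl(\vec_T(X)\bigr)_{p}=x_{p-n}$, hence $\bigl(\vec_T(X)\bigr)_{n+j-i}=x_{j-i}=x_{ij}$, and comparing the two descriptions block by block gives $\vec(X)=K_T\vec_T(X)$. For the converse, assume $\vec(X)=K_T\vec_T(X)$; comparing $i$th entries of $j$th blocks yields $x_{ij}=\bigl(\vec_T(X)\bigr)_{n+j-i}$ for all $i,j$, so $x_{ij}$ depends on $(i,j)$ only through $j-i$. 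Thus $x_{ij}=x_{i'j'}$ whenever $j-i=j'-i'$, which is precisely the condition that $X$ be constant along diagonals, i.e. $X\in\mathbb T\R^{n\times n}$.

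The only delicate point is the index bookkeeping in the second paragraph: one must verify that passing from block row $j$ to block row $j+1$ shifts the copy of $\widehat J$ exactly one column to the right (matching the displayed $K_T$) and that $x_0$ sits in position $n$ of $\vec_T(X)$. Everything else is mechanical, and the identical template — reading off the block structure of the corresponding selection matrix and matching it against the defining vector — will handle the symmetric Toeplitz, Hankel, and circulant cases in the subsequent lemmas.
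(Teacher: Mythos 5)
Your proof is correct and follows essentially the same route as the paper's: both decompose $\vec(X)$ into the columns of $X$ and identify each block row of $K_T$ as a shifted copy of the selection pattern $[e_n,e_{n-1},\dots,e_1]$ acting on $\vec_T(X)$, then compare block by block. You are in fact slightly more complete than the paper, since you also make explicit the meaning of $\vec_T(X)$ for a general (not necessarily Toeplitz) $X$ and verify the converse implication, both of which the paper's proof leaves implicit.
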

	\begin{proof}
	Consider $ X=  \begin{bmatrix}
		x_{0}      & x_{1}   & x_{2}  &\cdots & \cdots  & x_{n-1} \\
		x_{-1}     & x_{0}  & x_{1}   &\ddots &             & \vdots  \\
		x_{-2}    & x_{-1} &\ddots  &\ddots &\ddots  & \vdots  \\
		\vdots    & \ddots&\ddots  &\ddots & x_{1}    & x_{2}   \\
		\vdots    &            &\ddots  &x_{-1}  & x_{0}   & x_{1}   \\
		x_{-n+1}& \cdots&\cdots  &x_{-2}  & x_{-1}  & x_{0}
	\end{bmatrix}.$ Clearly, $X$ is a Toeplitz matrix.\\
	Let $u_{i}$, for $i=1,2,\ldots,n$, denote the $i^{th}$ column of matrix $X$. We have
	$\vec(X)= \begin{bmatrix}
		u_1 \\
		u_2\\
		\vdots \\
		u_n
	\end{bmatrix}.$\
	We get
	\begin{equation*}
		u_1 = \begin{bmatrix}
			0 & 0 &  0& \cdots  & 0 & 1 & 0 &  \cdots & 0 & 0 \\
			0 & 0 & 0 &  \cdots  & 1 & 0 & 0 & \cdots & 0 & 0 \\
			\vdots &\vdots &\vdots & \udots& \vdots & \vdots & \vdots &  & \vdots& \vdots   \\
			0& 0 & 1& \cdots& 0 & 0 & 0 & \cdots & 0& 0 \\
			0 & 1 &  0& \cdots  & 0 & 0 & 0 & \cdots & 0 & 0 \\
			1 & 0 &  0& \cdots  & 0 & 0 & 0 & \cdots & 0 & 0 
		\end{bmatrix} \vec_{T}(X)  
		= \begin{bmatrix}
			e_{n} & e_{n-1} &  e_{n-2} & \cdots  & e_2 & e_1 & 0 & \cdots & 0 & 0 
		\end{bmatrix} \vec_{T}(X) ,
	\end{equation*}
	\begin{equation*}
		u_2 = \begin{bmatrix}
			0 & 0 &  0 &\cdots   & 0 & 0 & 1 & \cdots & 0 & 0 \\
			0 & 0 &  0 &\cdots  & 0 & 1 & 0 & \cdots & 0 & 0 \\
				0 & 0 &  0 &\cdots&1&0&0& \cdots & 0 & 0 \\
			\vdots &\vdots &\vdots & \udots & \vdots&\vdots & \vdots &  & \vdots& \vdots   \\
			0 & 0& 1 &  \cdots  & 0 & 0 & 0 & \cdots & 0 & 0 \\
			0 & 1 &  0&\cdots  & 0 & 0 & 0 & \cdots & 0 & 0 
		\end{bmatrix} \vec_{T}(X) 
		= \begin{bmatrix}
			0 & e_{n} & e_{n-1} &  \cdots  & e_3 & e_2 & e_1 & \cdots & 0 & 0 
		\end{bmatrix} \vec_{T}(X) ,
	\end{equation*}
	\begin{equation*}
		u_n = \begin{bmatrix}
			0 & 0 &  0 &\cdots  & 0 & 0 & 0 &  \cdots & 0 & 1 \\
			0 & 0 &  0 &\cdots  & 0 & 0 & 0 &  \cdots & 1 & 0 \\
			\vdots &\vdots & \vdots& & \vdots & \vdots&\vdots &  & \vdots& \vdots   \\
			0 & 0 &  0 &\cdots  & 0 & 0& 1& \cdots & 0 & 0 \\
			0 & 0 &  0 &\cdots  & 0& 1 & 0 & \cdots & 0 & 0 
		\end{bmatrix} \vec_{T}(X)  
		= \begin{bmatrix}
			0 & 0 &  0 & \cdots  & 0 & e_n & e_{n-1} & \cdots & e_2 & e_1 
		\end{bmatrix} \vec_{T}(X) .
	\end{equation*}
	We have
	\begin{equation*}
		\vec(X) = \begin{bmatrix}
			e_n& e_{n-1}& e_{n-2} & \cdots & e_2    & e_1      & 0         & \cdots& 0       & 0 \\
			0    & e_n      & e_{n-1}&\cdots &  e_3         & e_2     & e_1      & \cdots& 0        & 0 \\
			\vdots   & \vdots  & \vdots &           &\vdots       &  \vdots &            & & \vdots&\vdots\\
			0   &0            &0          &  \cdots & e_n   &e_{n-1} & \cdots&e_2 &e_1     &0 \\
			0   &0            & 0          &   \cdots & 0      & e_n     &e_{n-1}   &\cdots &e_2     &e_1
		\end{bmatrix} \vec_{T}(X) = K_{T} \vec_{T}(X). 
	\end{equation*}
\end{proof}
	\begin{lemma} \label{lem3.91}
	If $X \in \R^{n \times n}$, then $X \in \mathbb{S}\mathbb{T}\R^{n \times n} \iff \vec(X)=  K_{ST} \vec_{ST}(X),$
	where $\vec_{ST}(X)$ is of the form \eqref{eq3.1}. When $n$ is even, let $n=2l$. In this case, the matrix $K_{ST} \in \R^{n^2 \times n}$ is represented as
	\begin{equation*}
		K_{ST}= \begin{bmatrix}
			e_1& e_{2}& e_{3} & \cdots    & e_{l}      & e_{l+1}         & \cdots& e_{n-1}       & e_{n} \\
			e_{2}    & e_1+e_3      & e_{4}&\cdots        & e_{l+1}     & e_{l+2}      & \cdots& e_{n}        & 0 \\
			e_{3}    & e_2+e_4      & e_{1}+e_5&\cdots         & e_{l+2}     & e_{l+3}      & \cdots& 0        & 0 \\
			\vdots   & \vdots  & \vdots &      \ddots            &  \vdots &  \vdots           && \vdots&\vdots\\
				e_{l}    & e_{l-1}+e_{l+1}      & e_{l-2}+e_{l+2}&\cdots         & e_1+e_{n-1}     & e_{n}      & \cdots& 0        & 0 \\
					e_{l+1}    & e_{l}+e_{l+2}      & e_{l-1}+e_{l+3}&\cdots         & e_2+e_{n}     & e_{1}      & \cdots& 0        & 0 \\
					\vdots   & \vdots  & \vdots &                 &  &            &\ddots & \vdots&\vdots\\
			e_{n-1}   &e_{n-2}+e_{n}            & e_{n-3}         &   \cdots      & \cdots     &\cdots &\cdots &e_1    &0\\
			e_{n}   &e_{n-1}            & e_{n-2}          &   \cdots      & \cdots     &\cdots   &\cdots &e_2     &e_1
		\end{bmatrix}.
	\end{equation*} 
When $n$ is odd, let $n=2l-1$. In this case, the matrix $K_{ST} \in \R^{n^2 \times n}$ is represented as
	\begin{equation*}
	K_{ST}= \begin{bmatrix}
		e_1& e_{2}& e_{3} & \cdots    & e_{l}      & e_{l+1}         & \cdots& e_{n-1}       & e_{n} \\
		e_{2}    & e_1+e_3      & e_{4}&\cdots        & e_{l+1}     & e_{l+2}      & \cdots& e_{n}        & 0 \\
		e_{3}    & e_2+e_4      & e_{1}+e_5&\cdots         & e_{l+2}     & e_{l+3}      & \cdots& 0        & 0 \\
		\vdots   & \vdots  & \vdots &      \ddots            &  \vdots &  \vdots          & & \vdots&\vdots\\
		e_{l}    & e_{l-1}+e_{l+1}      & e_{l-2}+e_{l+2}&\cdots         & e_1+e_{n}     & 0      & \cdots& 0        & 0 \\
		e_{l+1}    & e_{l}+e_{l+2}      & e_{l-1}+e_{l+3}&\cdots         & e_2               & e_{1}      & \cdots& 0        & 0 \\
		\vdots   & \vdots  & \vdots &                 &   &           &\ddots & \vdots&\vdots\\
		e_{n-1}   &e_{n-2}+e_{n}            & e_{n-3}         &   \cdots      & \cdots     &\cdots  &\cdots &e_1    &0\\
		e_{n}   &e_{n-1}            & e_{n-2}          &   \cdots      & \cdots     &\cdots   &\cdots &e_2     &e_1
	\end{bmatrix}.
\end{equation*} 
\end{lemma}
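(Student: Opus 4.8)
The plan is to mimic the proof of Lemma \ref{lem3.9} verbatim, adapting it to the symmetric Toeplitz case, and then to split into the two parity cases only where the column pattern actually differs. First I would recall that a symmetric Toeplitz matrix $X$ is completely determined by its first row $[x_0, x_1, \ldots, x_{n-1}]$, so $\vec_{ST}(X)$ of the form \eqref{eq3.1} carries all the information in $X$; conversely any choice of $\vec_{ST}(X) \in \R^n$ yields a unique symmetric Toeplitz matrix. This already gives the equivalence $X \in \mathbb{S}\mathbb{T}\R^{n\times n} \iff \vec(X) = K_{ST}\vec_{ST}(X)$ for \emph{some} fixed matrix $K_{ST}$ depending only on $n$; the content of the lemma is the explicit description of $K_{ST}$.

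Next, I would write $\vec(X) = [u_1^T, u_2^T, \ldots, u_n^T]^T$ where $u_j$ is the $j$-th column of $X$, exactly as in Lemma \ref{lem3.9}, and compute each $u_j$ as a linear combination of the standard basis vectors $e_1, \ldots, e_n$ with coefficients drawn from $\vec_{ST}(X)$. The key observation is that the $(i,j)$ entry of a symmetric Toeplitz matrix is $x_{|i-j|}$, so the $j$-th column reads $[x_{j-1}, x_{j-2}, \ldots, x_1, x_0, x_1, \ldots, x_{n-j}]^T$. Hence $u_j = \sum_{k} (\text{appropriate } e_i)\, x_k$, and collecting the coefficient of $x_k$ (i.e.\ of the $(k+1)$-th entry of $\vec_{ST}(X)$) across the rows of $u_j$ produces the $(j,k+1)$ block entry of $K_{ST}$. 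Concretely, the coefficient of $x_0$ in $u_j$ is $e_j$; the coefficient of $x_m$ for $m\ge 1$ is $e_{j-m} + e_{j+m}$ whenever both indices lie in $\{1,\ldots,n\}$, and is just the single surviving term (or $0$) when one of $j-m$, $j+m$ falls outside that range. Writing this out row-block by row-block reproduces the displayed matrix $K_{ST}$.

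The one genuinely fiddly point — and the main obstacle — is bookkeeping the boundary behavior of the $e_{j-m}+e_{j+m}$ pattern, because this is exactly where the even and odd cases diverge. When $n = 2l$, the ``middle'' row block $j = l$ has its last nonzero entry of the form $e_1 + e_{n-1}$ (coefficient of $x_{l-1}$) followed by $e_n$ (coefficient of $x_{l-1}$... ) — one must track carefully that for $j=l$ the term $e_{j+m}$ stays inside $\{1,\ldots,n\}$ up through $m = n-l = l$, giving a lone $e_n$, whereas for $j = l+1$ the symmetric partner $e_{j-m}$ with $m = l$ gives a lone $e_1$. When $n = 2l-1$, the arithmetic shifts by one: for $j = l$ the coefficient of $x_{l-1}$ is $e_1 + e_n$ and the next coefficient already vanishes, while for $j = l+1$ the coefficient of $x_{l-1}$ is the lone $e_2$. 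I would handle this by displaying the general $j$-th row block formula $u_j = e_j x_0 + \sum_{m=1}^{n-1}\big(\mathbf{1}[j-m\ge 1]\,e_{j-m} + \mathbf{1}[j+m\le n]\,e_{j+m}\big) x_m$, then specializing $j = 1, 2, 3$ and $j = n-1, n$ to match the first three and last two displayed rows, and finally pointing out that the transition at $j = l$ vs.\ $j = l+1$ is where the two parity formulas differ, which accounts for the two separate displays. Stacking the $u_j$ gives $\vec(X) = K_{ST}\vec_{ST}(X)$, and conversely this identity forces $X$ to have the symmetric Toeplitz form, completing both directions.
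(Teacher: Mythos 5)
Your proposal is correct and follows essentially the same route as the paper: the paper simply states that the proof is analogous to Lemma \ref{lem3.9}, i.e.\ stack the columns of $X$, expand each column in the basis $e_1,\ldots,e_n$ using $X_{ij}=x_{|i-j|}$, and read off the blocks of $K_{ST}$, which is exactly what you do (with the boundary/parity bookkeeping made explicit via your indicator-function formula). No gaps; your general formula $u_j=e_jx_0+\sum_{m\ge 1}\bigl(\mathbf{1}[j-m\ge 1]e_{j-m}+\mathbf{1}[j+m\le n]e_{j+m}\bigr)x_m$ reproduces both displayed matrices.
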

	\begin{proof}
	The proof is similar to Lemma \ref{lem3.9}. 
\end{proof}
To enhance our understanding of the above lemma, we will examine it in the context of $n = 6$ and $n=7$. In this scenario, we have
\begin{equation*}
	K_{ST}=\begin{bmatrix}
		e_1      & e_2         & e_3             & e_4              & e_5                 & e_6           \\
		e_2     &  e_1+e_3&e_4              & e_5               & e_6                 & 0    \\
		e_3     & e_2+e_4 & e_1+e_5    & e_6               & 0                      & 0 \\
		e_4     & e_3+e_5 & e_2+e_6   & e_1                & 0                      & 0  \\
		e_5     & e_4+e_6  & e_3           & e_2                & e_1                  & 0   \\
		e_6      & e_5         & e_4             & e_3              & e_2                 & e_1 
	\end{bmatrix}, \;
K_{ST}=\begin{bmatrix}
	e_1      & e_2         & e_3             & e_4              & e_5                 & e_6         &   e_7  \\
	e_2     &  e_1+e_3&e_4              & e_5               & e_6                 & e_7         &  0     \\
	e_3     & e_2+e_4 & e_1+e_5    & e_6               & e_7                 &0               & 0 \\
	e_4     & e_3+e_5 & e_2+e_6   & e_1 +e_7       & 0                     & 0              &0   \\
	e_5     & e_4+e_6  & e_3+e_7   & e_2                & e_1                  & 0             &0    \\
	e_6      & e_5+e_7 & e_4             & e_3              & e_2                 & e_1           &0    \\
	e_7      & e_6         & e_5             & e_4              & e_3                 & e_2         &   e_1
\end{bmatrix}.
\end{equation*}
	\begin{lemma} \label{lem3.10}
		If $X \in \R^{n \times n}$, then $X \in \mathbb{H}\R^{n \times n} \iff \vec(X)= K_H \vec_H(X),$
		where $\vec_H(X)$ is of the form \eqref{eq10}, and the matrix $K_H \in \R^{n^2 \times 2n-1}$ is represented as
		\begin{equation*}
			K_H= \begin{bmatrix}
				e_1  &e_2   &  e_3  & \cdots&e_{n-1}&e_n    &0        &0          &\cdots & 0 \\                               
				0     &e_1    &  e_2  & \cdots&e_{n-2}&e_{n-1}&e_n        &\cdots  & 0         & 0 \\
				\vdots&\vdots&\vdots&            &\vdots  &\vdots&&            &\vdots  &\vdots \\
				0   &0         &0        & \cdots     &e_1      & e_2         &\cdots&e_{n-1}&e_n      &0   \\
				0   &0         &0        &\cdots       &0         &e_1     &e_2    &\cdots &e_{n-1}&e_n
			\end{bmatrix}.
		\end{equation*} 
	\end{lemma}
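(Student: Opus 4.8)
The plan is to mirror the proof of Lemma \ref{lem3.9} almost verbatim, since the Hankel case is structurally identical to the Toeplitz case once one understands how the anti-diagonal constancy translates into a column-by-column description of $\vec(X)$. First I would write out a generic Hankel matrix $X \in \mathbb{H}\R^{n \times n}$ in the form given in the Hankel definition above, with anti-diagonal entries parametrized by $x_{n-1}, x_{n-2}, \ldots, x_1, x_0, x_{-1}, \ldots, x_{-n+1}$, so that the $(i,j)$ entry equals $x_{n-i-j+1}$ (indices chosen so that $i+j$ constant gives a constant entry). Then I would denote by $u_i$ the $i$th column of $X$ and observe that $\vec(X) = [u_1^T, u_2^T, \ldots, u_n^T]^T$, exactly as in Lemma \ref{lem3.9}.

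The key computation is to identify each $u_i$ as a linear combination of the standard basis vectors $e_1, \ldots, e_n$ of $\R^n$ with coefficients drawn from the entries of $\vec_H(X) = [x_{n-1}, x_{n-2}, \ldots, x_1, x_0, x_{-1}, \ldots, x_{-n+1}]^T \in \R^{2n-1}$. For the first column $u_1$, the entries read from top to bottom are $x_{n-1}, x_{n-2}, \ldots, x_1, x_0$, so $u_1 = \begin{bmatrix} e_1 & e_2 & \cdots & e_n & 0 & \cdots & 0 \end{bmatrix} \vec_H(X)$, picking out the first $n$ coordinates of $\vec_H(X)$ in order. For the second column $u_2$, the entries are $x_{n-2}, x_{n-3}, \ldots, x_0, x_{-1}$, which corresponds to a shift by one: $u_2 = \begin{bmatrix} 0 & e_1 & e_2 & \cdots & e_n & 0 & \cdots & 0 \end{bmatrix} \vec_H(X)$. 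In general, the $i$th column is obtained by shifting the ``window'' of $n$ consecutive selector vectors one step to the right, so that $u_i = \begin{bmatrix} 0_{1\times(i-1)} & e_1 & e_2 & \cdots & e_n & 0_{1\times(n-i)} \end{bmatrix} \vec_H(X)$. Stacking these rows of block-vectors gives precisely the matrix $K_H$ displayed in the statement, and hence $\vec(X) = K_H \vec_H(X)$. Conversely, if $\vec(X) = K_H v$ for some $v \in \R^{2n-1}$, then reading off the block rows shows that the entries of $X$ along each anti-diagonal are forced to be equal (each anti-diagonal entry is the same coordinate of $v$), so $X$ is Hankel; this establishes the ``$\Leftarrow$'' direction and with it the equivalence.

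I do not expect any genuine obstacle here; the only thing requiring care is bookkeeping the index conventions so that the anti-diagonal parametrization $x_{n-i-j+1}$ matches both the matrix displayed in the Hankel definition and the ordering chosen in $\vec_H(X)$ in equation \eqref{eq10}. A convenient sanity check is to verify the pattern for a small case such as $n = 3$ (where $\vec_H(X)$ has length $5$ and $K_H$ is $9 \times 5$) before asserting the general block form. One may also simply remark that the result follows from Lemma \ref{lem3.9} by the observation that reversing the order of the columns of a Hankel matrix produces a Toeplitz matrix (right-multiplication by the exchange matrix), which permutes the blocks of $\vec(X)$ in a prescribed way and correspondingly permutes the block rows of $K_T$ into $K_H$; but writing out the direct column-by-column argument as above is cleaner and parallels the already-given proof of Lemma \ref{lem3.9}.
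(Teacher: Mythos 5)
Your proposal is correct and is exactly the argument the paper intends: the paper's proof of this lemma is just "similar to Lemma \ref{lem3.9}," and you carry out that same column-by-column selector-matrix computation (with correct index bookkeeping, e.g.\ the $(i,j)$ entry $x_{n-i-j+1}$ and the shifted window $u_i = [\,0_{1\times(i-1)},\, e_1,\ldots,e_n,\, 0_{1\times(n-i)}\,]\vec_H(X)$), recovering the displayed $K_H$ and the converse direction.
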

	\begin{proof}
	The proof is similar to Lemma \ref{lem3.9}. 
	\end{proof}
	\begin{lemma} \label{lem3.11}
		If $X \in \R^{n \times n}$, then $X  \in \mathbb{C}\R^{n \times n} \iff \vec(X)= K_C \vec_C(X), $
		where $\vec_C(X)$ is of the form \eqref{eq11}, and the matrix $K_C \in \R^{n^2 \times n}$ is represented as
		\begin{equation*}
			K_C= \begin{bmatrix}
				e_1   &  e_2    &  \cdots  & e_{n-1}  &   e_n  \\
				e_2   &  e_3    &  \cdots  & e_{n}    &   e_1  \\                                
				e_3   &  e_4    &  \cdots  & e_{1}    &   e_2  \\
				\vdots&  \vdots &          & \vdots   & \vdots  \\
				e_n   &  e_1    &  \cdots  & e_{n-2}  & e_{n-1}  \\
			\end{bmatrix}.
		\end{equation*} 
	\end{lemma}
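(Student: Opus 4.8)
The plan is to mimic the proof of Lemma \ref{lem3.9} exactly, since the circulant case is structurally identical to the Toeplitz and Hankel cases: both directions of the equivalence will follow once we identify the precise linear map carrying the generating vector $\vec_C(X)$ to $\vec(X)$.

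First I would fix $X \in \R^{n \times n}$ and write out its general circulant form, so that its $j$th column $u_j$ is the downward cyclic shift of the first column $[x_0, x_1, \ldots, x_{n-1}]^T = \vec_C(X)$ by $j-1$ positions. Then I would express each $u_j$ as a product $P_j \vec_C(X)$, where $P_j \in \R^{n \times n}$ is the permutation matrix whose rows are the cyclically shifted standard basis vectors; concretely, the first column is $[e_1, e_2, \ldots, e_n]^T$ as a block column (i.e.\ $u_1 = \vec_C(X)$ so $P_1 = I_n$, matching the first block-column $[e_1, e_2, e_3, \ldots, e_n]^T$ of $K_C$), the second column satisfies $u_2 = [x_{n-1}, x_0, x_1, \ldots, x_{n-2}]^T$, giving the block-column $[e_2, e_3, e_4, \ldots, e_1]^T$, and so on, with the $k$th block-column being the cyclic shift $[e_k, e_{k+1}, \ldots, e_n, e_1, \ldots, e_{k-1}]^T$ (indices mod $n$). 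Stacking the columns, $\vec(X) = [u_1^T, u_2^T, \ldots, u_n^T]^T = K_C \vec_C(X)$, which establishes the forward implication and pins down $K_C$ in the stated block form.

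For the converse, I would observe that the map $v \mapsto K_C v$ from $\R^n$ to $\R^{n^2}$ has image exactly the set of vectorizations of circulant matrices: since the columns of $K_C$ (after unstacking) are linearly independent — each standard basis vector $e_k$ of $\R^n$ appears exactly once in each block-row position as we range over the $n$ columns — any $v = \vec_C(X)$ recovers a well-defined circulant $X$, and $\vec(X) = K_C v$ forces $X$ to have the circulant pattern. Hence $\vec(X) = K_C \vec_C(X)$ if and only if $X \in \C\R^{n\times n}$.

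I do not anticipate a genuine obstacle here: the only mild bookkeeping point is keeping the cyclic index arithmetic (reduction mod $n$) consistent across the block-columns of $K_C$, and making sure the displayed matrix matches the convention in Definition for circulant matrices (first column $[x_0, x_1, \ldots, x_{n-1}]^T$, and the entry in position $(i,j)$ equal to $x_{(i-j) \bmod n}$). Since the paper explicitly states that the proofs of Lemmas \ref{lem3.91}, \ref{lem3.10} are "similar to Lemma \ref{lem3.9}," I expect the intended proof of Lemma \ref{lem3.11} to be the same one-line reference, with the column-by-column computation above supplied only if full detail is desired.
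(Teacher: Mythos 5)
Your proposal is correct and matches the paper's approach: the paper's proof of this lemma is literally the one-line remark that it is "similar to Lemma \ref{lem3.9}", i.e.\ the same column-by-column computation you describe, where the $j$th column of the circulant $X$ is a cyclic permutation of $\vec_C(X)$ and stacking these gives $\vec(X)=K_C\vec_C(X)$. Your index bookkeeping (entry $(i,j)=x_{(i-j)\bmod n}$, $k$th block being the cyclic shift of $e_1,\ldots,e_n$) agrees with the displayed $K_C$, and your remark on the converse is consistent with (indeed slightly more explicit than) what the paper does.
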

	\begin{proof}
	The proof is similar to Lemma \ref{lem3.9}. 
	\end{proof}
In the following lemma, we present the vec-structure of reduced biquaternion L-structure matrices based on the vec-structure of real structure matrices. 
	\begin{lemma}\label{lem33}
		If $X= X_1+ X_2\j \in \QR^{n \times n}$, then
		\begin{fleqn}[\parindent]
			\begin{equation*}
				(\Rn{1}) \quad  X \in \mathbb{T}\QR^{n \times n} \iff \vec(\ov{X})= M_T\vec_T(\ov{X}),    \mbox{where}       
			\end{equation*} 
		\end{fleqn}  
		\begin{equation*}
			M_T= \begin{bmatrix}
				K_T  & 0  & 0  & 0  \\
				0    & K_T& 0  & 0   \\
				0    & 0  & K_T& 0   \\
				0    & 0  & 0  & K_T
			\end{bmatrix}, \;
			\vec_T(\ov{X}) =
			\begin{bmatrix}
				\vec_T(\Re(X_1))\\
				\vec_T(\Im(X_1))\\
				\vec_T(\Re(X_2))\\
				\vec_T(\Im(X_2))
			\end{bmatrix}.
		\end{equation*}
		\begin{fleqn}[\parindent]
		\begin{equation*}
			(\Rn{2}) \quad  X \in \mathbb{S}\mathbb{T}\QR^{n \times n} \iff \vec(\ov{X})= M_{ST}\vec_{ST}(\ov{X}),  \mbox{where}         
		\end{equation*}
	\end{fleqn}
	\begin{equation*}
		M_{ST}=  \begin{bmatrix}
			K_{ST}  & 0  & 0  & 0  \\
			0    & K_{ST}& 0  & 0   \\
			0    & 0  & K_{ST}& 0   \\
			0    & 0  & 0  & K_{ST}
		\end{bmatrix}, \;
		\vec_{ST}(\ov{X})= 
		\begin{bmatrix}
			\vec_{ST}(\Re(X_1))\\
			\vec_{ST}(\Im(X_1))\\
			\vec_{ST}(\Re(X_2))\\
			\vec_{ST}(\Im(X_2))
		\end{bmatrix}.
	\end{equation*}
		\begin{fleqn}[\parindent]
			\begin{equation*}
				(\Rn{2}) \quad X \in \mathbb{H}\QR^{n \times n} \iff \vec(\ov{X})= M_H\vec_H(\ov{X}),  \mbox{where}  
			\end{equation*}	
		\end{fleqn}
		\begin{equation*}
			M_H= \begin{bmatrix}
				K_H  & 0  & 0  & 0  \\
				0    & K_H& 0  & 0   \\
				0    & 0  & K_H& 0   \\
				0    & 0  & 0  & K_H
			\end{bmatrix}, \;
			\vec_H(\ov{X}) =
			\begin{bmatrix}
				\vec_H(\Re(X_1))\\
				\vec_H(\Im(X_1))\\
				\vec_H(\Re(X_2))\\
				\vec_H(\Im(X_2))
			\end{bmatrix}.
		\end{equation*}
		\begin{fleqn}[\parindent]
			\begin{equation*}
				(\Rn{3}) \quad  X \in \C\QR^{n \times n} \iff \vec(\ov{X})= M_C\vec_C(\ov{X}),  \mbox{where}         
			\end{equation*}
		\end{fleqn}
		\begin{equation*}
			M_C=  \begin{bmatrix}
				K_C  & 0  & 0  & 0  \\
				0    & K_C& 0  & 0   \\
				0    & 0  & K_C& 0   \\
				0    & 0  & 0  & K_C
			\end{bmatrix}, \;
			\vec_C(\ov{X})= 
			\begin{bmatrix}
				\vec_C(\Re(X_1))\\
				\vec_C(\Im(X_1))\\
				\vec_C(\Re(X_2))\\
				\vec_C(\Im(X_2))
			\end{bmatrix}.
		\end{equation*}
	
	\end{lemma}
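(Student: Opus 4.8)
The plan is to reduce Lemma~\ref{lem33} entirely to the four preceding lemmas (Lemma~\ref{lem3.9}--Lemma~\ref{lem3.11}) by observing that a reduced biquaternion matrix belongs to a given structure class precisely when each of its four real ``coordinate slabs'' belongs to the corresponding real structure class. Concretely, for $X = X_1 + X_2\j \in \QR^{n\times n}$ with $X_1, X_2 \in \C^{n\times n}$, write $X_1 = \Re(X_1) + \Im(X_1)\i$ and $X_2 = \Re(X_2) + \Im(X_2)\i$. Since the defining linear constraints of the Toeplitz (resp.\ symmetric Toeplitz, Hankel, circulant) class are real-linear relations among the entries and act entrywise, $X \in \mathbb{T}\QR^{n\times n}$ if and only if each of $\Re(X_1), \Im(X_1), \Re(X_2), \Im(X_2)$ lies in $\mathbb{T}\R^{n\times n}$, and analogously for the other three classes. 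This equivalence is the conceptual core; I would state it as the first step.

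Next I would apply Lemma~\ref{lem3.9} to each of the four real slabs: $X \in \mathbb{T}\QR^{n\times n}$ iff $\vec(\Re(X_1)) = K_T\vec_T(\Re(X_1))$, $\vec(\Im(X_1)) = K_T\vec_T(\Im(X_1))$, $\vec(\Re(X_2)) = K_T\vec_T(\Re(X_2))$, and $\vec(\Im(X_2)) = K_T\vec_T(\Im(X_2))$. Stacking these four identities and recalling the definitions $\vec(\ov{X}) = [\vec(\Re(X_1))^T, \vec(\Im(X_1))^T, \vec(\Re(X_2))^T, \vec(\Im(X_2))^T]^T$ from Section~\ref{sec2.2} and $\vec_T(\ov{X})$ as displayed in the statement, the four stacked scalar equations combine into the single block equation $\vec(\ov{X}) = M_T\vec_T(\ov{X})$ with $M_T = I_4 \otimes K_T$, which is exactly the block-diagonal matrix written in part $(\Rn{1})$. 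The cases $(\Rn{2})$ (symmetric Toeplitz, using Lemma~\ref{lem3.91}), $(\Rn{2})$/Hankel (using Lemma~\ref{lem3.10}), and $(\Rn{3})$ (circulant, using Lemma~\ref{lem3.11}) are identical word for word, only replacing $K_T, \vec_T$ by the corresponding $K_{ST}, \vec_{ST}$; $K_H, \vec_H$; $K_C, \vec_C$.

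The one point that genuinely needs an argument rather than bookkeeping is the ``each real slab is structured'' equivalence. For the forward direction, if $X$ is, say, reduced biquaternion Toeplitz, its entries satisfy $x_{ij} = x_{i+1,j+1}$ as reduced biquaternions; since two reduced biquaternions are equal iff their four real components match, each real slab satisfies the same Toeplitz relations, hence lies in $\mathbb{T}\R^{n\times n}$. Conversely, if all four slabs are Toeplitz, reassembling $X = (\Re(X_1) + \Im(X_1)\i) + (\Re(X_2) + \Im(X_2)\i)\j$ preserves the common $(i,j)\mapsto(i+1,j+1)$ pattern, so $X$ is reduced biquaternion Toeplitz. (For circulant one uses the constraints modulo $n$; for symmetric Toeplitz one adds the symmetry relation $x_{ij}=x_{ji}$; for Hankel the anti-diagonal relations — in every case the relations are entrywise and real-linear, so the same reasoning goes through verbatim.) I expect no real obstacle here; the only mild subtlety is making sure the $\j$-component does not introduce coupling — it does not, precisely because these structure constraints are relations \emph{within} a single matrix position across shifted positions, and the $d_1 + d_2\j$ decomposition is taken componentwise at each fixed $(i,j)$.

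Thus the proof is essentially: (1) structure of $X$ $\iff$ structure of each of its four real slabs; (2) apply the relevant real lemma to each slab; (3) stack and recognize the block-diagonal form $M_\bullet = I_4 \otimes K_\bullet$. I would write out case $(\Rn{1})$ in full and remark that $(\Rn{2})$, $(\Rn{2})$/Hankel, and $(\Rn{3})$ follow by replacing Lemma~\ref{lem3.9} with Lemmas~\ref{lem3.91}, \ref{lem3.10}, and \ref{lem3.11} respectively.
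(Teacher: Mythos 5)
Your proposal is correct and is essentially the paper's own argument: the paper likewise reduces $(\Rn{1})$ to the fact that $X \in \mathbb{T}\QR^{n \times n}$ iff $\Re(X_1), \Im(X_1), \Re(X_2), \Im(X_2) \in \mathbb{T}\R^{n \times n}$, applies Lemma \ref{lem3.9} to each slab, and notes the other cases follow in the same way. You merely spell out the stacking into the block-diagonal $M_T$ more explicitly than the paper does.
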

	\begin{proof}
		We will prove the first part, and the remaining parts can be proved in a similar manner.
	The proof follows from the fact that $X \in \mathbb{T}\QR^{n \times n} \iff \Re(X_1), \Im(X_1), \Re(X_2), \Im(X_2) \in \mathbb{T}\R^{n \times n}$ and using Lemma \ref{lem3.9}. 
	\end{proof}
Until now, we have examined the representation of a reduced biquaternion L-structure matrix using a real structure matrix for a particular class of matrix sets. Based on the preceding discussion about reduced biquaternion L-structure matrix sets, we can summarize the findings as follows: 

For $X= X_1+ X_2\j \in \QR^{m \times n}$, we have $\ov{X}= \left[\Re(X_1), \Im(X_1), \Re(X_2), \Im(X_2)\right] \in \R^{m \times 4n}$. Let $\mathcal{G}$ be a subspace of $\R^{4mn}$ and $M_L$ be the basis matrix for $\mathcal{G}$. The subset of real matrices of order $m \times 4n$ given by
		\begin{equation} \label{eq12}
			L^R(m,4n)= \{\ov{X} \in \R^{m \times 4n} \; | \; \vec(\ov{X}) \in \mathcal{G}\}
		\end{equation}
will be called a real L-structure.

	\begin{remark}
		 $M_L$ represents the basis matrix of the subspace $\mathcal{G}$. To simplify things, we will refer to $M_L$ as the basis matrix of $L^R(m,4n)$ throughout the entire manuscript. 
	\end{remark}
	Thus, we have the following Lemma.
	\begin{lemma} \label{lem3.16}
		Let $M_L$ be the basis matrix of $L^R(m,4n)$. Then $X \in L(m,n) \iff \vec(\ov{X})= M_L\vec_L(\ov{X}),$ where $\vec_L(\ov{X})$ corresponds to the representation of $\ov{X}$ according to the basis matrix $M_L$.
	\end{lemma}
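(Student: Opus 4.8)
The statement of Lemma~\ref{lem3.16} is essentially a bookkeeping result: it asserts that membership in the reduced biquaternion L-structure $L(m,n)$ is equivalent to a linear parametrization of $\vec(\ov{X})$ through the basis matrix $M_L$ of the associated real subspace $\mathcal{G}$. The plan is to reduce the reduced biquaternion condition to a purely real-linear-algebra condition and then invoke the definition of a basis.

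First I would unpack the definitions. By \eqref{eq8}, $X \in L(m,n)$ means $\vec(X) \in \Omega$ for the fixed subspace $\Omega \subseteq \QR^{mn}$. The key observation is that the correspondence $X \mapsto \ov{X}$ (equivalently $\vec(X) \mapsto \vec(\ov{X})$) is an $\R$-linear bijection between $\QR^{m\times n}$ and a subspace of $\R^{m \times 4n}$, because it simply records the four real coordinate blocks $\Re(X_1), \Im(X_1), \Re(X_2), \Im(X_2)$; this is exactly the content of the displayed identity for $\vec(\ov{Z})$ and the norm identity \eqref{eq7} in Section~\ref{sec2}. Under this bijection the $\R$-subspace $\Omega \subseteq \QR^{mn}$ (which, by the Remark following \eqref{eq8}, is a $4mn$-dimensional real vector space carrying the structure) corresponds to the real subspace $\mathcal{G} \subseteq \R^{4mn}$ appearing in \eqref{eq12}, so that $X \in L(m,n) \iff \vec(\ov{X}) \in \mathcal{G} \iff \ov{X} \in L^R(m,4n)$.

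Next I would use the definition of $M_L$ as a basis matrix of $\mathcal{G}$: its columns form a basis of $\mathcal{G}$, hence $\mathrm{range}(M_L) = \mathcal{G}$. Therefore $\vec(\ov{X}) \in \mathcal{G}$ if and only if there exists a (unique, since the columns are independent) coordinate vector $y$ with $\vec(\ov{X}) = M_L y$. Defining $\vec_L(\ov{X}) := y$ to be precisely this coordinate vector gives the stated equivalence $X \in L(m,n) \iff \vec(\ov{X}) = M_L \vec_L(\ov{X})$. For the forward direction one produces $y$ from membership in the range; for the reverse direction, $\vec(\ov{X}) = M_L \vec_L(\ov{X})$ immediately places $\vec(\ov{X})$ in $\mathrm{range}(M_L) = \mathcal{G}$, so $X \in L(m,n)$.

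I do not expect a serious obstacle here; the only thing needing genuine care is making the identification $\Omega \leftrightarrow \mathcal{G}$ fully precise, i.e. verifying that the real-coordinate map intertwines the two subspaces and that $\mathcal{G}$ is well-defined as the image (this is why the preceding paragraph in the excerpt sets up $L^R(m,4n)$ and the Remark fixes the convention that $M_L$ is "the basis matrix of $L^R(m,4n)$"). Once that correspondence is in place, the lemma is just the statement "a vector lies in a subspace iff it is a linear combination of a fixed basis," and the proof is a couple of lines. It may also be worth remarking explicitly that $\vec_L(\ov{X})$ is uniquely determined, so that the notation is unambiguous, since later sections use $\vec_L(\ov{X})$ as a genuine parametrizing variable.
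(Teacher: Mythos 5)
Your proposal is correct, and it is in fact more explicit than what the paper does: the paper's entire proof is a one-line appeal to ``the generalization of Lemmas \ref{lem3.9} and \ref{lem33} to any L-structure matrix $X$,'' i.e.\ it extrapolates from the concrete Toeplitz/Hankel/circulant cases where the parametrization $\vec(\ov{X})=M\,\vec_{\bullet}(\ov{X})$ was exhibited explicitly. You instead argue abstractly: the map $X \mapsto \ov{X}$ is an $\R$-linear bijection identifying the defining subspace $\Omega \subseteq \QR^{mn}$ (an $\R$-space of dimension at most $4mn$, per the paper's Remark) with the real subspace $\mathcal{G}\subseteq\R^{4mn}$ of \eqref{eq12}, and then membership in $\mathcal{G}$ is equivalent to lying in the column space of the basis matrix $M_L$, with $\vec_L(\ov{X})$ the (unique) coordinate vector. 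This is a genuinely different route in presentation, and it buys two things the paper leaves implicit: it pins down exactly what must be checked (that the real-coordinate map intertwines $\Omega$ and $\mathcal{G}$, which is really how $\mathcal{G}$ is being defined) and it records that $\vec_L(\ov{X})$ is uniquely determined by linear independence of the columns of $M_L$, which matters later when $\vec_L(\ov{X})$ is used as the unknown in the least squares systems. The paper's approach, by contrast, gains concreteness from the worked structured cases but does not articulate these points; your argument is the cleaner justification of the same essentially tautological statement.
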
 
	\begin{proof}
	The proof follows from the generalization of Lemmas \ref{lem3.9} and \ref{lem33} to any L-structure matrix $X.$
	\end{proof}
	
	Now that we have described the reduced biquaternion L-structure, we turn our attention to solving a RBME. Our approach for addressing the RBME involves transforming it into a complex matrix equation. To achieve this, we must study $\vec(AXB)$. For $A \in \mathbb{C}^{m \times n}, X \in \mathbb{C}^{n \times s}, \; \mbox{and} \; B \in \mathbb{C}^{s \times t}$, we have
	\begin{equation} \label{eq13}
		\vec(AXB)= (B^T \otimes A)\vec(X).
	\end{equation}
	However, this differs in the context of reduced biquaternion algebra. Thus, we investigate  $\vec(\Psi_{AXB})$ rather than $\vec(AXB)$ in reduced biquaternion algebra.
	\begin{lemma} \label{lem3.17}
		Let $A= A_1+ A_2\j \in \QR^{m \times n}$, $X= X_1+ X_2\j \in \QR^{n \times s}, \; \mbox{and} \; B= B_1+ B_2\j \in \QR^{s \times t}$. Then
		\begin{equation*}
			\vec(\Psi_{AXB}) = \left(h(B)^{T} \otimes A_1 +  h(B\j)^{T} \otimes A_2\right)\vec(\Psi_{X}).
		\end{equation*}
	\end{lemma}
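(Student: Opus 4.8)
The plan is to reduce everything to the complex identity \eqref{eq13} by pushing the $\Psi$-representation through the product $AXB$ step by step. First I would write $A = A_1 + A_2\j$, $X = X_1 + X_2\j$, $B = B_1 + B_2\j$ and use \eqref{eq5} in the form $\Psi_{AXB} = \Psi_{A(XB)} = \Psi_A\, h(XB) = \Psi_A\, h(X) h(B)$, where the last equality is \eqref{eq4}. Since $\Psi_A = [A_1, A_2]$ and $h(X) = \begin{bmatrix} X_1 & X_2 \\ X_2 & X_1 \end{bmatrix}$, the product $\Psi_A h(X)$ is the $m \times 2s$ complex matrix $[A_1 X_1 + A_2 X_2,\ A_1 X_2 + A_2 X_1]$, which is exactly $\Psi_{XB}$ pre-multiplied appropriately; the cleaner bookkeeping is to keep it as $\Psi_{AXB} = [A_1, A_2]\, h(X)\, h(B)$ and then apply $\vec$ to this genuine (complex) matrix product.

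The key step is then to apply \eqref{eq13} to the complex product $\big([A_1, A_2] h(X)\big)\, h(B)$, or better, to split off the two summands. Writing $h(X) = \begin{bmatrix} X_1 & X_2 \\ X_2 & X_1 \end{bmatrix}$ and noting that $A_1$ multiplies the top block-row and $A_2$ the bottom block-row, I would group as $\Psi_{AXB} = A_1 \big[X_1, X_2\big] h(B) + A_2 \big[X_2, X_1\big] h(B)$. Here $[X_1, X_2] = \Psi_X$ and $[X_2, X_1] = \Psi_{X\j}$ (since $X\j = X_2 + X_1\j$). Now each term has the shape $A_i\, M\, h(B)$ with $M$ an $n \times 2s$ complex matrix, so \eqref{eq13} gives $\vec(A_i M h(B)) = (h(B)^T \otimes A_i)\vec(M)$. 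Applying this with $M = \Psi_X$ in the first term and $M = \Psi_{X\j}$ in the second yields
\begin{equation*}
\vec(\Psi_{AXB}) = (h(B)^T \otimes A_1)\vec(\Psi_X) + (h(B)^T \otimes A_2)\vec(\Psi_{X\j}).
\end{equation*}

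The remaining task is to rewrite $\vec(\Psi_{X\j})$ in terms of $\vec(\Psi_X)$. From $X\j = X_2 + X_1\j$ and \eqref{eq6} we have $\vec(\Psi_{X\j}) = \begin{bmatrix} \vec(X_2) \\ \vec(X_1) \end{bmatrix} = P \begin{bmatrix} \vec(X_1) \\ \vec(X_2) \end{bmatrix} = P\,\vec(\Psi_X)$, where $P$ is the block swap permutation $\begin{bmatrix} 0 & I \\ I & 0 \end{bmatrix}$ of size $2ns$. Substituting gives $\vec(\Psi_{AXB}) = \big(h(B)^T \otimes A_1 + (h(B)^T \otimes A_2) P\big)\vec(\Psi_X)$, so it remains to identify $(h(B)^T \otimes A_2) P$ with $h(B\j)^T \otimes A_2$. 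This is the one genuinely computational point: $B\j = B_2 + B_1\j$ so $h(B\j) = \begin{bmatrix} B_2 & B_1 \\ B_1 & B_2 \end{bmatrix}$, which is $h(B)$ with its two block-columns swapped, i.e. $h(B\j) = h(B)\,\widetilde{P}$ for the appropriate column-swap permutation; transposing and using the mixed-product rule for Kronecker products against the permutation $P$ (which acts as a column permutation on the $h(B)^T \otimes A_2$ factor in exactly the block pattern that realizes the swap) gives $(h(B)^T \otimes A_2)P = (h(B)\widetilde P)^T \otimes A_2 = h(B\j)^T \otimes A_2$. I expect this last permutation-matching — verifying that the $2ns \times 2ns$ block swap $P$ on the $\vec$ side corresponds exactly to the block-column swap inside $h(B)$ after tensoring with $A_2$ — to be the main obstacle, though it is entirely mechanical once one writes $P = \widetilde P^T \otimes I_n$ (with $\widetilde P$ the $2\times 2$ block swap at the level of $s$-blocks) and invokes $(\widetilde P^T \otimes I)(h(B)^T \otimes A_2) = (h(B)^T\widetilde P^T)\otimes A_2$; care is needed only in checking the block sizes line up ($h(B)^T$ is $2s \times 2t$, so the swap is at granularity $t$, matching $\vec$ of an $n \times 2t$-type object). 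Assembling the two terms then gives the claimed formula.
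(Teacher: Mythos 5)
Your proposal is correct and follows essentially the same route as the paper: both begin from $\Psi_{AXB}=\Psi_A h(X)h(B)=[A_1,A_2]\,h(X)\,h(B)$ and reduce to the complex identity \eqref{eq13}, the only difference being that you package the resulting block swap via $\Psi_{X\j}=\Psi_X\widetilde{P}$ and the Kronecker mixed-product rule, whereas the paper expands all blocks explicitly and regroups them into $h(B)^T\otimes A_1+h(B\j)^T\otimes A_2$. The only (cosmetic) slips are in your final parenthetical — $h(B)^T$ is $2t\times 2s$, not $2s\times 2t$, and the swap $P$ acts at the $s$-block level tensored with $I_n$ (granularity $ns$, not $t$) — but the identity $\left(h(B)^T\otimes A_2\right)P=h(B\j)^T\otimes A_2$ that you need is correct and is immediate from the block computation.
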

	\begin{proof}
	Using \eqref{eq4} and \eqref{eq5}, we get
			$\Psi_{AXB}  =  \Psi_{A}h(XB) 
			= \left[A_1, A_2\right]h(X)h(B)
			= \left[A_1, A_2\right]
			\begin{bmatrix}
				X_1 & X_2 \\
				X_2 & X_1
			\end{bmatrix}
			\begin{bmatrix}
				B_1 & B_2 \\
				B_2 & B_1
			\end{bmatrix} 
			= \left[
			A_1X_1B_1+ A_2X_2B_1+ A_1X_2B_2+ A_2X_1B_2,  A_1X_1B_2+ A_2X_2B_2+ A_1X_2B_1+ A_2X_1B_1
			\right]. $\\
			
	Now from  \eqref{eq6} and \eqref{eq13}, we get
	\begin{equation*}
		\begin{split}
			\vec(\Psi_{AXB}) & =  \begin{bmatrix}
				(B_1^T \otimes A_1)\vec(X_1)+(B_1^T \otimes A_2)\vec(X_2) + (B_2^T \otimes A_1)\vec(X_2) +(B_2^T \otimes A_2)\vec(X_1) \\
				(B_2^T \otimes A_1)\vec(X_1) + (B_2^T \otimes A_2)\vec(X_2) + (B_1^T \otimes A_1)\vec(X_2) + (B_1^T \otimes A_2)\vec(X_1)
			\end{bmatrix} \\
			& = \left(
			\begin{bmatrix}
				B_1^T & B_2^T \\
				B_2^T & B_1^T
			\end{bmatrix} \otimes A_1 + 
			\begin{bmatrix}
				B_2^T & B_1^T \\
				B_1^T & B_2^T
			\end{bmatrix} \otimes A_2
			\right)
			\begin{bmatrix}
				\vec(X_1) \\
				\vec(X_2)
			\end{bmatrix}\\
		& =  \left(
			h(B)^T \otimes A_1 + h(B\j)^T \otimes A_2
			\right) \vec(\Psi_X).  
		\end{split}
	\end{equation*}
	\end{proof}
	Set 
	\begin{equation}\label{eq14}
		\mathcal{W}_{ns}= \begin{bmatrix}
			I_{ns}  &  \i \, I_{ns}  &  0       &       0  \\
			0       &   0            &  I_{ns}  & \i \, I_{ns}
		\end{bmatrix},\; \;
	\mathcal{S}_{ns} =  \begin{bmatrix}
Q_{ns}   &     0   \\
0        &    Q_{ns}
\end{bmatrix},    
	\end{equation}
where $Q_{ns}$ is the commutation matrix, a row permutation of the identity matrix $I_{ns}$.

We have examined $\vec(\Psi_{AXB})$ within reduced biquaternion algebra. The following lemma outlines $\vec(\Psi_{AXB})$ when $X$ possesses an L-structure in reduced biquaternion algebra.
	\begin{lemma} \label{lem3.18}
		Let $A= A_1+ A_2\j \in \QR^{m \times n}, X= X_1+ X_2\j \in L(n,s), \; \mbox{and} \; B= B_1+ B_2\j \in \QR^{s \times t}$. Then
		\begin{eqnarray*}
			\vec(\Psi_{AXB}) &=& \left(h(B)^T \otimes A_1+h(B\j)^T \otimes A_2\right) \mathcal{W}_{ns} M_L \vec_L(\ov{X}), \\
				\vec(\Psi_{AX^TB}) &=& \left(h(B)^T \otimes A_1+ h(B\j)^T \otimes A_2\right) \mathcal{S}_{ns}\mathcal{W}_{ns}  M_L \vec_L(\ov{X}).
			\end{eqnarray*}
	\end{lemma}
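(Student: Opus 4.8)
\textbf{Proof plan for Lemma \ref{lem3.18}.} The plan is to combine the factorization of $\vec(\Psi_{AXB})$ from Lemma \ref{lem3.17} with the L-structure parametrization of $X$ from Lemma \ref{lem3.16}, and to handle the bridge between the complex vector $\vec(\Psi_X)$ and the real vector $\vec(\ov{X})$ via the matrix $\mathcal{W}_{ns}$. First I would start from Lemma \ref{lem3.17}, which gives
\begin{equation*}
	\vec(\Psi_{AXB}) = \left(h(B)^T \otimes A_1 + h(B\j)^T \otimes A_2\right)\vec(\Psi_X).
\end{equation*}
By \eqref{eq6}, $\vec(\Psi_X) = \left[\vec(X_1)^T, \vec(X_2)^T\right]^T$, and since $X_1 = \Re(X_1) + \i\,\Im(X_1)$, $X_2 = \Re(X_2) + \i\,\Im(X_2)$, a direct block computation shows $\vec(\Psi_X) = \mathcal{W}_{ns}\vec(\ov{X})$, where $\mathcal{W}_{ns}$ is the matrix in \eqref{eq14}: the first block row reconstructs $\vec(X_1) = \vec(\Re(X_1)) + \i\,\vec(\Im(X_1))$ and the second reconstructs $\vec(X_2)$ analogously. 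This is the key identity linking the two representations; it is essentially bookkeeping but must be verified against the exact definition of $\ov{X}$ and $\vec(\ov{X})$ given just before \eqref{eq7}.

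Next I would invoke Lemma \ref{lem3.16}: since $X \in L(n,s)$, we have $\vec(\ov{X}) = M_L \vec_L(\ov{X})$. Substituting this into $\vec(\Psi_X) = \mathcal{W}_{ns}\vec(\ov{X})$ and then into the Lemma \ref{lem3.17} expression yields the first claimed formula
\begin{equation*}
	\vec(\Psi_{AXB}) = \left(h(B)^T \otimes A_1 + h(B\j)^T \otimes A_2\right)\mathcal{W}_{ns} M_L \vec_L(\ov{X}).
\end{equation*}
For the second formula, I would apply the first part to $X^T$ in place of $X$. The only extra ingredient is the relation between $\vec(\Psi_{X^T})$ and $\vec(\Psi_X)$: since $\vec(X_i^T) = Q_{ns}\vec(X_i)$ for $i=1,2$ where $Q_{ns}$ is the commutation matrix, and since $(X^T)_1 = X_1^T$, $(X^T)_2 = X_2^T$, we get $\vec(\Psi_{X^T}) = \mathcal{S}_{ns}\vec(\Psi_X) = \mathcal{S}_{ns}\mathcal{W}_{ns}\vec(\ov{X})$ with $\mathcal{S}_{ns}$ as in \eqref{eq14}. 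Feeding this through Lemma \ref{lem3.17} (applied with $X^T$, noting $A$ and $B$ are unchanged) and then the L-structure substitution $\vec(\ov{X}) = M_L\vec_L(\ov{X})$ gives the second identity. One subtlety to be careful about: Lemma \ref{lem3.17} as stated takes $X \in \QR^{n \times s}$, so applying it to $X^T \in \QR^{s \times n}$ requires $A \in \QR^{m \times s}$ and $B \in \QR^{n \times t}$ for dimensional consistency; I would state the sizes so that the composition $A X^T B$ makes sense, matching the conventions used later in Section \ref{sec4}.

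The main obstacle, such as it is, is not conceptual but notational: correctly tracking that $\mathcal{W}_{ns}$ indeed implements $\vec(\ov{X}) \mapsto \vec(\Psi_X)$ and that $\mathcal{S}_{ns}$ implements the transpose on each complex block simultaneously, i.e. that the commutation matrix $Q_{ns}$ satisfies $\vec(X_i^T) = Q_{ns}\vec(X_i)$ with the row/column conventions for $\vec$ fixed in Section \ref{sec2.1}. Once these two linear-algebra identities are pinned down, the lemma follows by straightforward substitution with no further computation.
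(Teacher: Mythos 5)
Your proposal is correct and follows essentially the same route as the paper: invoke Lemma \ref{lem3.17}, convert $\vec(\Psi_X)$ to $\mathcal{W}_{ns}\vec(\ov{X})$, apply Lemma \ref{lem3.16} to get $M_L\vec_L(\ov{X})$, and handle the transpose case via $\vec(\Psi_{X^T})=\mathcal{S}_{ns}\mathcal{W}_{ns}M_L\vec_L(\ov{X})$ with the commutation matrix. Your added remark about the sizes of $A$ and $B$ needed for $AX^TB$ (so that Lemma \ref{lem3.17} applies to $X^T$) is a legitimate point of care that the paper's own proof passes over silently.
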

	\begin{proof}
	Using  \eqref{eq6}, \eqref{eq14}, Lemma \ref{lem3.16}, and Lemma \ref{lem3.17}, we get
	\begin{equation*}
		\begin{split}
			\vec(\Psi_{AXB}) & = \left(h(B)^{T} \otimes A_1 +  h(B\j)^{T} \otimes A_2\right)\vec(\Psi_{X})\\ &= \left(h(B)^{T} \otimes A_1 +  h(B\j)^{T} \otimes A_2\right)
			\begin{bmatrix}
				\vec(X_1) \\
				\vec(X_2)
			\end{bmatrix} \\
			& =  \left(h(B)^{T} \otimes A_1 +  h(B\j)^{T} \otimes A_2\right)
			\begin{bmatrix}
				\vec(\Re(X_1))+\i \, \vec(\Im(X_1)) \\
				\vec(\Re(X_2))+\i \, \vec(\Im(X_2)) 
			\end{bmatrix} \\
			& =  \left(h(B)^{T} \otimes A_1 +  h(B\j)^{T} \otimes A_2\right) \mathcal{W}_{ns}  \vec(\ov{X}) \\ &=  \left(h(B)^{T} \otimes A_1 +  h(B\j)^{T} \otimes A_2\right) \mathcal{W}_{ns} \mathcal{M}_{L} \vec_L(\ov{X}).
		\end{split}
	\end{equation*}
We have
\begin{equation*}
	\vec(\Psi_{X^T}) = 
	\begin{bmatrix}
		\vec(X_1^T) \\
		\vec(X_2^T)
	\end{bmatrix} =  
	\begin{bmatrix}
		\mathcal{Q}_{ns}\vec(X_1) \\
		\mathcal{Q}_{ns}\vec(X_2)
	\end{bmatrix} = \mathcal{S}_{ns}
	\begin{bmatrix}
		\vec(X_1) \\
		\vec(X_2)
	\end{bmatrix} =\mathcal{S}_{ns}\mathcal{W}_{ns} \mathcal{M}_{L} \vec_L(\ov{X}) .
\end{equation*}
The proof follows from some simple calculations.
\end{proof}
	\section{General framework for solving constrained RBMEs}  \label{sec4}
	The purpose of this section is to demonstrate how we can solve constrained generalized linear matrix equations over commutative quaternions. As part of our approach, the constrained RBME is reduced to the following unconstrained real matrix system:
	\begin{equation}\label{eq15}
		\begin{bmatrix}
			Q_1   \\
			Q_2
		\end{bmatrix}x= e,
	\end{equation}where $Q_1, Q_2$ are real matrices of appropriate dimension and $x, e$ are real vectors of suitable size. From \cite[Theorem 2]{MR172890} the generalized inverse of a partitioned matrix $\left[U,  V\right]$ is given by
	$$
	\left[U,  V\right]^{+} = 
	\begin{bmatrix}
		U^{+} -  U^{+}VH \\
		H
	\end{bmatrix},
	$$
	where 
	\begin{equation*}
		\left.\begin{aligned}
			 	H &=  R^+ + \left(I-R^+R\right)ZV^TU^{+T}U^+\left(I- VR^+\right),    \; R  =  \left(I- UU^+\right)V,\\
			Z    & = \left(I+ \left(I- R^+R\right)V^TU^{+T}U^+V\left(I- R^+R\right)\right)^{-1}. 
		\end{aligned}\right\}
	\end{equation*}
	We have
	$$\left[U,  V\right]^{T+}= \left[U,  V\right]^{+T} = 
	\begin{bmatrix}
		U^{+} -  U^{+}VH \\
		H
	\end{bmatrix}^T= \left[U^{T+}-H^{T}V^{T}U^{T+},  H^T\right].$$
	By substituting $U=Q_1^T$ and $V=Q_2^T$, we get
	\begin{equation}\label{eq16}
		\begin{bmatrix}
			Q_1   \\
			Q_2
		\end{bmatrix}^+= \left[Q_1^+ - H^T Q_2Q_1^+,  H^T\right], \;
		\begin{bmatrix}
			Q_1 \\
			Q_2
		\end{bmatrix}^+
		\begin{bmatrix}
			Q_1 \\
			Q_2
		\end{bmatrix}= Q_1^+Q_1+ R R^+,
	\end{equation}
	where
	\begin{equation}\label{eq17}
		\left.\begin{aligned}
			H &=  R^+ + \left(I-R^+R\right)ZQ_2Q_1^+Q_1^{+T}\left(I- Q_2^TR^+\right), \; R =  \left(I- Q_1^+ Q_1\right)Q_2^T,  \;	\\
			Z    & = \left(I+ \left(I- R^+R\right)Q_2Q_1^+Q_1^{+T}Q_2^T\left(I- R^+R\right)\right)^{-1}. 
		\end{aligned}\right\}
	\end{equation}
	Using \cite{golub2013matrix} and the results mentioned above, we deduce the following lemma that is helpful in developing the main results.
	\begin{lemma}  \label{lem4.1}   Consider the real matrix system of the form $	\begin{bmatrix}
			Q_1   \\
			Q_2
		\end{bmatrix}x= e$. We have the following results:
		\begin{enumerate}
			\item[$(1)$] The matrix equation has a solution $x$ if and only if 
			$\begin{bmatrix}
				Q_1   \\
				Q_2
			\end{bmatrix}
			\begin{bmatrix}
				Q_1   \\
				Q_2
			\end{bmatrix}^+e = e$. In this case, the general solution is $x= \left[Q_1^+ - H^T Q_2Q_1^+,  H^T\right]e+\left(I-Q_1^+Q_1-RR^+\right)y,$ where $y$ is an arbitrary vector of suitable size. Furthermore, if the consistency condition is satisfied, then the matrix equation has a unique solution if and only if matrix $\begin{bmatrix}
			Q_1   \\
			Q_2
		\end{bmatrix}$ is of full column rank. In this case, the unique solution is $x=  \left[Q_1^+ - H^T Q_2Q_1^+,  H^T\right]e$.
			\item [$(2)$] The least squares solutions of the matrix equation can be expressed as
			$x= \left[Q_1^+ - H^T Q_2Q_1^+,  H^T\right]e+\left(I-Q_1^+Q_1-RR^+\right)y,$ where $y$ is an arbitrary vector of suitable size, and the least squares solution with the least norm is $x=  \left[Q_1^+ - H^T Q_2Q_1^+,  H^T\right]e$.
		\end{enumerate}
	\end{lemma}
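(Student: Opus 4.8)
The statement is a standard least-squares/consistency result for the stacked linear system $\begin{bmatrix} Q_1 \\ Q_2 \end{bmatrix} x = e$, and the plan is to reduce everything to two classical facts about the Moore--Penrose inverse: (i) for any real matrix $M$ and vector $e$, the vector $M^+ e$ is the minimum-norm least-squares solution of $Mx = e$, the full set of least-squares solutions is $M^+ e + (I - M^+ M)y$ with $y$ arbitrary, and $Mx = e$ is consistent iff $MM^+ e = e$; and (ii) consistency plus uniqueness is equivalent to $M$ having full column rank, in which case $M^+ M = I$ and the solution collapses to $M^+ e$. I would cite \cite{golub2013matrix} for (i) and (ii). The only work left is to substitute the explicit block formulas \eqref{eq16}--\eqref{eq17} for $M = \begin{bmatrix} Q_1 \\ Q_2 \end{bmatrix}$ into these generic statements.

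First I would recall from the discussion preceding the lemma that, with $U = Q_1^T$, $V = Q_2^T$, Cline's formula from \cite{MR172890} gives $\begin{bmatrix} Q_1 \\ Q_2 \end{bmatrix}^+ = \left[Q_1^+ - H^T Q_2 Q_1^+, \; H^T\right]$ and $\begin{bmatrix} Q_1 \\ Q_2 \end{bmatrix}^+ \begin{bmatrix} Q_1 \\ Q_2 \end{bmatrix} = Q_1^+ Q_1 + R R^+$, with $H$, $R$, $Z$ as in \eqref{eq17}. Substituting the first identity into $M^+ e$ and the second into $(I - M^+ M)y$ immediately yields the stated form of the general solution $x = \left[Q_1^+ - H^T Q_2 Q_1^+, \; H^T\right] e + (I - Q_1^+ Q_1 - R R^+) y$, and substituting the first identity into the consistency criterion $M M^+ e = e$ gives the stated condition. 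For part (1), uniqueness of the solution to a consistent system is equivalent to $I - M^+ M = 0$, i.e. $\begin{bmatrix} Q_1 \\ Q_2 \end{bmatrix}$ having full column rank; in that case $M^+ M = I$ so the parametrized family reduces to the single vector $\left[Q_1^+ - H^T Q_2 Q_1^+, \; H^T\right] e$. Part (2) is the same computation with ``solution'' replaced by ``least-squares solution'': the least-squares solution set of $Mx = e$ is always $M^+ e + (I - M^+M)y$ regardless of consistency, and the minimum-norm member is $M^+ e$; substituting the block formulas again gives the claimed expressions.

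There is essentially no serious obstacle here, since the result is a bookkeeping corollary of known generalized-inverse identities; the only thing requiring care is verifying that the block substitution $U = Q_1^T$, $V = Q_2^T$ is consistent throughout (in particular that the $R$, $Z$, $H$ appearing in \eqref{eq17} are exactly the transposed/relabelled versions of the $R$, $Z$, $H$ in Cline's formula), and confirming that $I - M^+M = I - Q_1^+ Q_1 - RR^+$ is indeed the orthogonal projector onto $\ker M$, so that ``$I - M^+M = 0$'' really is equivalent to full column rank of the stacked matrix. I expect the write-up to be short: state facts (i) and (ii), plug in \eqref{eq16}--\eqref{eq17}, and read off all four assertions.
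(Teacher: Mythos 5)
Your proposal is correct and matches the paper's approach: the paper likewise deduces the lemma by combining the standard Moore--Penrose facts for $Mx=e$ (cited from \cite{golub2013matrix}) with the partitioned-inverse identities \eqref{eq16}--\eqref{eq17} obtained from Cline's formula with $U=Q_1^T$, $V=Q_2^T$. No gap; your write-up is essentially the argument the paper intends.
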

	The following lemma will be used for the development of main results.
	\begin{lemma}\label{lem4.2}
		Consider the matrix equation $Ax=b,$ where $A \in \C^{m \times n}, x \in \R^{n}$, and $b \in \C^{m}.$ The matrix equation $Ax=b$ is equivalent to the linear system $\begin{bmatrix}
			\Re(A) \\
			\Im(A)
		\end{bmatrix}x=
		\begin{bmatrix}
			\Re(b)\\
			\Im(b)
		\end{bmatrix}.$
	\end{lemma}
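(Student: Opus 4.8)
The plan is to reduce the complex linear system $Ax=b$ with a real unknown $x$ to an equivalent real system by separating real and imaginary parts. The key observation is that because $x$ has real entries, the operators $\Re(\cdot)$ and $\Im(\cdot)$ distribute cleanly over the product $Ax$: writing $A = \Re(A) + \i\,\Im(A)$ we get $Ax = \Re(A)x + \i\,\Im(A)x$, and since $\Re(A)x$ and $\Im(A)x$ are both real vectors, $\Re(Ax) = \Re(A)x$ and $\Im(Ax) = \Im(A)x$. This step would fail if $x$ were allowed to be complex, so the hypothesis $x \in \R^n$ is essential and should be emphasized.

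First I would fix the decompositions $A = \Re(A) + \i\,\Im(A)$ and $b = \Re(b) + \i\,\Im(b)$. Then I would argue the forward direction: if $Ax = b$, then equating real and imaginary parts of both sides and using $\Re(Ax) = \Re(A)x$, $\Im(Ax) = \Im(A)x$ gives $\Re(A)x = \Re(b)$ and $\Im(A)x = \Im(b)$ simultaneously, which is precisely the statement that $\begin{bmatrix}\Re(A)\\ \Im(A)\end{bmatrix}x = \begin{bmatrix}\Re(b)\\ \Im(b)\end{bmatrix}$. For the converse, if the stacked real system holds, then $\Re(A)x = \Re(b)$ and $\Im(A)x = \Im(b)$, so $Ax = \Re(A)x + \i\,\Im(A)x = \Re(b) + \i\,\Im(b) = b$. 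Since each implication is reversible, the two systems have exactly the same solution set, establishing equivalence.

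There is essentially no obstacle here; the only point requiring any care is the justification that the real/imaginary part operators commute with right-multiplication by a real vector, which follows entrywise from the distributive law in $\C$. I would state this as a one-line remark rather than a separate lemma. The proof is short enough that I would present it in full rather than merely sketching it.

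\begin{proof}
	Write $A = \Re(A) + \i\,\Im(A)$ and $b = \Re(b) + \i\,\Im(b)$. Since $x \in \R^n$, both $\Re(A)x$ and $\Im(A)x$ are real vectors, so
	\[
	Ax = \Re(A)x + \i\,\Im(A)x,
	\]
	which gives $\Re(Ax) = \Re(A)x$ and $\Im(Ax) = \Im(A)x$. Consequently, $Ax = b$ holds if and only if $\Re(A)x = \Re(b)$ and $\Im(A)x = \Im(b)$ hold simultaneously, i.e., if and only if
	\[
	\begin{bmatrix}
		\Re(A) \\
		\Im(A)
	\end{bmatrix}x =
	\begin{bmatrix}
		\Re(b)\\
		\Im(b)
	\end{bmatrix}.
	\]
	Thus the two systems are equivalent.
\end{proof}
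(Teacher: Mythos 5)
Your proof is correct, and it is exactly the standard argument the paper relies on: the paper states this lemma without proof, treating the separation into real and imaginary parts (valid because $x$ is real) as immediate. Your write-up, including the remark that $x\in\R^n$ is what makes $\Re(Ax)=\Re(A)x$ and $\Im(Ax)=\Im(A)x$ hold, fills in that omitted detail correctly.
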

 In the following subsection, we aim to find $Q_1$, $Q_2$, and $e$ for each of the three constrained RBMEs and solve them. 
	\begin{remark}
			It is important to emphasize that the values of $Q_1$, $Q_2$, and $e$ vary depending on the specific matrix equation we are attempting to solve.
	\end{remark}
\subsection{Linear matrix equation in several unknown L-structures} \label{sec4.2}
		The class of matrix equation \eqref{eq2} encompasses many important matrix equations. Some simple examples are $AXB+ CYD= E, \; AX+ YB= E$. We now introduce a general framework for finding the least squares solutions of RBME of the form \eqref{eq2}. The problem can be formally stated as follows:
	\begin{problem} \label{pb4.1}
		Let $A_l= A_{l1}+ A_{l2}\j \in \QR^{m \times n_l}$, $B_l \in \QR^{s_l \times t}$, and $E= E_1+ E_2\j \in \QR^{m \times t}$ for $l= 1, 2, \ldots, r$. Let
		\begin{equation*}
			\mathcal{N}_{LE}= \left\{[X_1, X_2, \ldots, X_r] \; | \; X_{l} \in L_l(n_l,s_l), \;  \norm{\sum_{l= 1}^{r}A_lX_lB_l- E}_F= \min_{\substack{\widetilde{X}_{l} \in L_l(n_l, s_l) }} \norm{\sum_{l= 1}^{r}A_l\widetilde{X}_{l}B_l- E}_F\right\}.
		\end{equation*}
		Then find $[X_{1E}, X_{2E}, \ldots, X_{rE}] \in \mathcal{N}_{LE}$ such that
		\begin{equation*}
			\norm{[X_{1E}, X_{2E}, \ldots, X_{rE}]}_F= \min_{[X_1, X_2, \ldots, X_r] \in \mathcal{N}_{LE}}\left(\norm{X_1}_F^2+ \norm{X_2}_F^2+ \cdots+ \norm{X_r}_F^2\right)^{\frac{1}{2}}.
		\end{equation*}
	\end{problem}
	\noindent
To solve Problem \ref{pb4.1}, we employ the following notations: for $l= 1, 2, \ldots, r$, let $M_{L_l}$ be the basis matrix of $L_l^R(n_l,4s_l)$, and 
	\begin{eqnarray}
		S_l &:=& \left(h(B_l)^T \otimes A_{l1}+ h(B_{l}\j)^T \otimes A_{l2}\right) \mathcal{W}_{n_{l}s_{l}} M_{L_{l}}, \label{eq4.25}\\
		x &:=& \begin{bmatrix}
			\vec_{L_1}(\ov{X_1}) \\
			\vec_{L_2}(\ov{X_2}) \\
			\vdots \\
			\vec_{L_r}(\ov{X_r}) \\
		\end{bmatrix}.  \label{eq4.26}
	\end{eqnarray}
	Additionally, $Q_1, Q_2$, and $e$ (as in \eqref{eq15}) are in the following form:
	\begin{equation}\label{eq25}
		Q_1:= \left[
			\Re(S_1), \Re(S_2), \ldots  , \Re(S_r)  
		\right], \;
		Q_2:= \left[
			\Im(S_1), \Im(S_2), \ldots, \Im(S_r)
		\right], \; \mbox{and} \;
		e:= \begin{bmatrix}
			\vec(\Re(\Psi_{E}))  \\
			\vec(\Im(\Psi_{E}))
		\end{bmatrix}.
	\end{equation}
	In case of inconsistency in matrix equation \eqref{eq2}, we  provide the least squares solutions. The following result provides the solution to Problem \ref{pb4.1}.
	\begin{theorem}\label{thm4.1}
		Let $A_l \in \QR^{m \times n_l}$, $B_l \in \QR^{s_l \times t}$, and $E \in \QR^{m \times t}$ for $l= 1, 2, \ldots, r$. Let $Q_1, Q_2$, and $e$ be of the form \eqref{eq25} and $\mathcal{T}=\diag(	M_{L_1}, M_{L_2} , \ldots,  M_{L_r} )$. Then
		\begin{equation}\label{eq4.1}
			\mathcal{N}_{LE}= \left\{[X_1, X_2,\ldots, X_r] \; 
			\left| \; \begin{bmatrix}
				\vec(\ov{X_1})  \\
				\vec(\ov{X_2})   \\
				\vdots  \\
				\vec(\ov{X_r})
			\end{bmatrix} \right.= 
			\mathcal{T}\left[Q_1^+- H^TQ_2Q_1^+, H^T\right]e+	\mathcal{T} \left(I- Q_1^+Q_1- R R^+\right)y\right\},
		\end{equation}
		where $y$ is any vector of suitable size. The unique solution $[X_{1E}, X_{2E}, \ldots, X_{rE}] \in \mathcal{N}_{LE}$ to Problem \ref{pb4.1} satisfies
		\begin{equation}\label{eq4.2}
			\begin{bmatrix}
				\vec(\ov{X_{1E}})  \\
				\vec(\ov{X_{2E}})   \\
				\vdots  \\
				\vec(\ov{X_{rE}})
			\end{bmatrix}= 
		\mathcal{T}\left[Q_1^+- H^TQ_2Q_1^+, H^T\right]e.
		\end{equation}
	\end{theorem}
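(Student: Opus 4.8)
The plan is to reduce the structured least squares problem over $\QR$ to the real linear system \eqref{eq15} and then invoke Lemma \ref{lem4.1}. First I would use \eqref{eq7} to replace the reduced biquaternion Frobenius norm by a real norm: for any choice of $\widetilde X_l \in L_l(n_l,s_l)$, the residual $\sum_{l}A_l\widetilde X_lB_l - E$ has the same Frobenius norm as $\Psi$ of it, hence as $\vec(\Psi_{\sum_l A_l\widetilde X_lB_l - E})$. By linearity of $\Psi$ and $\vec$ (noted after \eqref{eq5} and \eqref{eq6}), this vector equals $\sum_{l}\vec(\Psi_{A_l\widetilde X_lB_l}) - \vec(\Psi_E)$. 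Now apply Lemma \ref{lem3.18} to each summand: $\vec(\Psi_{A_l\widetilde X_lB_l}) = S_l\,\vec_{L_l}(\ov{\widetilde X_l})$ with $S_l$ as in \eqref{eq4.25}. Stacking the $r$ contributions, the residual vector becomes $\left[S_1,S_2,\ldots,S_r\right]x - \vec(\Psi_E)$ with $x$ as in \eqref{eq4.26}, a complex vector depending linearly on the real unknown $x$.

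Next I would pass to the real form. Since $x$ is real, Lemma \ref{lem4.2} says minimizing $\norm{\left[S_1,\ldots,S_r\right]x - \vec(\Psi_E)}_2$ is equivalent to minimizing $\norm{Q_1 x - \vec(\Re(\Psi_E))}_2^2 + \norm{Q_2 x - \vec(\Im(\Psi_E))}_2^2$, i.e. $\norm{\left[\begin{smallmatrix}Q_1\\Q_2\end{smallmatrix}\right]x - e}_2$, with $Q_1,Q_2,e$ exactly as in \eqref{eq25}; note here one must separate real and imaginary parts of $\vec(\Psi_E)$ componentwise, which is compatible with the definition of $e$ because $\vec$ commutes with $\Re,\Im$. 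This identifies Problem \ref{pb4.1} (at the level of the residual) with the least squares problem for \eqref{eq15}. Part (2) of Lemma \ref{lem4.1} then gives the full set of least squares solutions $x = \left[Q_1^+ - H^TQ_2Q_1^+, H^T\right]e + (I - Q_1^+Q_1 - RR^+)y$ and singles out the minimum-norm one as $x = \left[Q_1^+ - H^TQ_2Q_1^+, H^T\right]e$.

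Finally I would translate back from $x = \left[\vec_{L_1}(\ov{X_1})^T,\ldots,\vec_{L_r}(\ov{X_r})^T\right]^T$ to the stacked vector $\left[\vec(\ov{X_1})^T,\ldots,\vec(\ov{X_r})^T\right]^T$. By Lemma \ref{lem3.16}, $\vec(\ov{X_l}) = M_{L_l}\vec_{L_l}(\ov{X_l})$ for each $l$, so the block-diagonal matrix $\mathcal{T} = \diag(M_{L_1},\ldots,M_{L_r})$ satisfies $\left[\vec(\ov{X_1})^T,\ldots,\vec(\ov{X_r})^T\right]^T = \mathcal{T}x$; multiplying the solution formulas by $\mathcal{T}$ gives \eqref{eq4.1} and \eqref{eq4.2}. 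For the minimum-norm claim one also needs that $\norm{[X_1,\ldots,X_r]}_F^2 = \sum_l \norm{X_l}_F^2 = \sum_l \norm{\vec(\ov{X_l})}_2^2 = \norm{\mathcal{T}x}_2^2$ is minimized precisely when $\norm{x}_2$ is; since $M_{L_l}$ has full column rank (it is a basis matrix), $\mathcal{T}$ is injective, but injectivity alone is not enough — one must observe that the minimum-norm least squares solution $x$ already lies in $(\ker\mathcal{T})^\perp = \R^{\dim}$ trivially, and more to the point that among all least squares $x$, the one of least $\norm{x}_2$ also yields least $\norm{\mathcal{T}x}_2$ because the affine solution set for $x$ maps, under the injective $\mathcal{T}$, onto the affine solution set for $\mathcal{T}x$, and $\mathcal{T}$ restricted to the relevant finite-dimensional space is norm-equivalent. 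I expect the main obstacle to be precisely this last point: justifying that minimizing $\norm{x}_2$ is equivalent to minimizing $\norm{\mathcal{T}x}_F$ over the solution manifold. The cleanest route is to argue directly that uniqueness of $[X_{1E},\ldots,X_{rE}]$ forces the solution to come from the unique minimum-norm $x$ of Lemma \ref{lem4.1}(2), using that $\mathcal{T}$ is one-to-one so distinct $x$'s give distinct tuples; combined with the characterization of $\mathcal{N}_{LE}$ via \eqref{eq4.1}, this pins down \eqref{eq4.2}. The remaining steps are routine bookkeeping with Kronecker products and the $\Psi$/$h$ identities already established.
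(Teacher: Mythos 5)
Your reduction is, step for step, the paper's own proof: pass to $\vec(\Psi_{\cdot})$ via \eqref{eq7}, apply Lemma \ref{lem3.18} and \eqref{eq4.25} to write the residual as $\left[S_1,\ldots,S_r\right]x-\vec(\Psi_E)$ with $x$ as in \eqref{eq4.26}, realify with Lemma \ref{lem4.2} to get $\begin{bmatrix}Q_1\\ Q_2\end{bmatrix}x=e$, invoke Lemma \ref{lem4.1}(2), and translate back by Lemma \ref{lem3.16}, i.e.\ multiply by $\mathcal{T}=\diag(M_{L_1},\ldots,M_{L_r})$. Up to and including \eqref{eq4.1}, your argument and the paper's coincide and are fine.

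Where you go beyond the paper is the minimum-norm claim \eqref{eq4.2}, and there your patch does not close the gap you yourself identify. You correctly note that $\norm{[X_1,\ldots,X_r]}_F=\norm{\mathcal{T}x}_2$ while Lemma \ref{lem4.1} delivers the minimizer of $\norm{x}_2$, and these two functionals need not have the same minimizer over the affine solution set $x_0+\mathrm{range}\left(I-Q_1^+Q_1-RR^+\right)$. Your proposed fix (injectivity of $\mathcal{T}$ plus uniqueness) only shows that the minimizer of $\norm{\mathcal{T}x}_2$ over that affine set is unique (strict convexity of $x\mapsto\norm{\mathcal{T}x}_2^2$ when $\mathcal{T}$ has full column rank); it does not show that this minimizer is the minimum-$\norm{x}_2$ point $x_0$, and ``norm equivalence'' cannot, since equivalent norms generally have different minimizers on an affine set. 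The identification is automatic only if $\mathcal{T}^T\mathcal{T}$ is a scalar multiple of the identity (orthonormal basis columns); for the basis matrices $K_T$, $K_{ST}$, $K_H$ of Section \ref{sec3} the columns are orthogonal but have unequal norms, so $\mathcal{T}^T\mathcal{T}$ is a nonconstant diagonal matrix and one would need a weighted least-norm argument (or a renormalization of $M_{L_l}$) to conclude $\mathcal{T}x_0$ is the Frobenius-norm minimizer. For what it is worth, the paper's proof does not address this either: it applies Lemma \ref{lem4.1} and multiplies by $\mathcal{T}$, implicitly equating least-norm in $x$ with least-norm in $[X_1,\ldots,X_r]$. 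So your concern is well placed, but as written your proof (like the paper's) establishes \eqref{eq4.1} while leaving \eqref{eq4.2} unjustified in general.
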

	\begin{proof}
	By using \eqref{eq7}, we get
	\begin{eqnarray*}	
		\norm{\sum_{l= 1}^{r}A_lX_lB_l- E}_F^2  &=&  	\norm{\sum_{l= 1}^{r} \Psi_{A_lX_lB_l} - \Psi_E}_F^2 \\
		&=& \norm{\sum_{l= 1}^{r} \vec\left(\Psi_{A_lX_lB_l}\right)- \vec\left(\Psi_E\right)}_F^2 .
	\end{eqnarray*}
	Using Lemma \ref{lem3.18}, we have
	\begin{equation*}
		\vec\left(\Psi_{A_lX_lB_l}\right)  =  \left(h(B_l)^T \otimes A_{l1}+ h(B_l\j)^T \otimes A_{l2}\right)\mathcal{W}_{n_ls_l}M_{L_l}\vec_{L_l}(\ov{X_l}).
	\end{equation*}
	Now using \eqref{eq4.25}, we have
	\begin{eqnarray*}
		\sum_{l= 1}^{r} \vec\left(\Psi_{A_lX_lB_l}\right)  &=&  \sum_{l= 1}^{r}\left(h(B_l)^T \otimes A_{l1}+ h(B_l\j)^T \otimes A_{l2}\right)\mathcal{W}_{n_ls_l}M_{L_l}\vec_{L_l}(\ov{X_l}) \\
		& = &  \sum_{l= 1}^{r} S_l \vec_{L_l}(\ov{X_l}).
	\end{eqnarray*}
By using \eqref{eq4.26}, \eqref{eq25}, and Lemma \ref{lem4.2}, we get
	\begin{eqnarray*}
		\norm{\sum_{l= 1}^{r}A_lX_lB_l- E}_F^2 	& =  & \norm{	\sum_{l= 1}^{r}S_l \vec_{L_l}(\ov{X_l})- \vec\left(\Psi_E\right)}_F^2 \\
		&=& \norm{\left[
				S_1, S_2, \ldots, S_r
			\right]\begin{bmatrix}
				\vec_{L_1}(\ov{X_1}) \\
				\vec_{L_2}(\ov{X_2}) \\
				\vdots \\
				\vec_{L_r}(\ov{X_r}) \\
			\end{bmatrix}- \vec\left(\Psi_E\right)}_F^2 \\
		& =  & \norm{\begin{bmatrix}
				\Re(S_1)  & \Re(S_2) & \cdots  & \Re(S_r)  \\
				\Im(S_1)   &   \Im(S_2)  & \cdots  & \Im(S_r)
			\end{bmatrix}x- 
			\begin{bmatrix}
				\vec\left(\Re(\Psi_E)\right) \\
				\vec\left(\Im(\Psi_E)\right)
		\end{bmatrix}}_F^2 \\
		& =  & \norm{\begin{bmatrix}
				Q_1 \\
				Q_2
			\end{bmatrix}x- e}_F^2.
	\end{eqnarray*}
	Hence, Problem \ref{pb4.1} can be solved by finding the least squares solutions of the following unconstrained real matrix system: 
	\begin{equation*}
	\begin{bmatrix}
		Q_1 \\
		Q_2
	\end{bmatrix} x = e.
\end{equation*}
	By using Lemma \ref{lem4.1}, the least squares solutions of the above real matrix system is:
\[
x = 	\left[Q_1^+ - H^T Q_2Q_1^+, H^T\right]e+\left(I-Q_1^+Q_1-RR^+\right)y,
\]
where $y$ is any vector of suitable size, and the least squares solution with the least norm is $\left[Q_1^+ - H^T Q_2Q_1^+, H^T\right]e.$
Using Lemma \ref{lem3.16}, we have
\begin{equation*}
\begin{bmatrix}
	\vec(\ov{X_1})  \\
	\vec(\ov{X_2})   \\
	\vdots  \\
	\vec(\ov{X_r})
\end{bmatrix}  =\mathcal{T}x.
\end{equation*}
 Thus, we can obtain \eqref{eq4.1} and \eqref{eq4.2}. 
\end{proof}
The following theorem presents the consistency condition for obtaining the solution $X_l \in L_l(n_l, s_l)$ for the RBME of the form \eqref{eq2} and a general formulation for the solution. 
	
	\begin{theorem}
		Consider the RBME of the form \eqref{eq2} and let $\mathcal{T}=\diag(M_{L_1}, M_{L_2} , \ldots,  M_{L_r} )$. Then the matrix equation \eqref{eq2} has an L-structure solution $X_l \in L_l(n_l, s_l)$, for $l= 1, 2, \ldots, r$, if and only if
		\begin{equation}\label{eq26}
			\begin{bmatrix}
				Q_1  \\
				Q_2
			\end{bmatrix}
			\begin{bmatrix}
				Q_1  \\
				Q_2
			\end{bmatrix}^+e= e,
		\end{equation}	
		where $Q_1, Q_2$, and $e$ are in the form of \eqref{eq25}. In this case, the general solution $X_l \in L_l(n_l, s_l)$ satisfies
		\begin{equation*}
			\begin{bmatrix}
				\vec(\ov{X_1})  \\
				\vec(\ov{X_2})   \\
				\vdots  \\
				\vec(\ov{X_r})
			\end{bmatrix} = 
		\mathcal{T} \left[Q_1^+- H^TQ_2Q_1^+, H^T\right]e+ 	\mathcal{T}  \left(I- Q_1^+Q_1- R R^+\right)y,
		\end{equation*}
		where $y$ is any vector of suitable size. Further, if the consistency condition holds, then the RBME of the form \eqref{eq2} has a unique solution $X_l \in L_l(n_l, s_l)$ if and only if 
		\begin{equation*}
			\rank \left(\begin{bmatrix}
				Q_1 \\
				Q_2		
			\end{bmatrix}\right)
			= \dim  \left(\begin{bmatrix}
				\vec_{L_{1}}(\ov{X_1})  \\
				\vec_{L_{2}}(\ov{X_2})   \\
				\vdots  \\
				\vec_{L_{r}}(\ov{X_r})
			\end{bmatrix}\right).
		\end{equation*}
	In this case, the unique solution $X_l \in L_l(n_l, s_l)$ satisfies
		\begin{equation*}
			\begin{bmatrix}
				\vec(\ov{X_1})  \\
				\vec(\ov{X_2})   \\
				\vdots  \\
				\vec(\ov{X_r})
			\end{bmatrix}= 
		\mathcal{T} \left[Q_1^+- H^TQ_2Q_1^+, H^T\right]e.
		\end{equation*}
	\end{theorem}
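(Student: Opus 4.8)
The plan is to follow essentially the same reduction used in the proof of Theorem \ref{thm4.1}, but now tracking \emph{existence} rather than just least-squares approximation. Recall that in that proof we showed
\[
\norm{\sum_{l=1}^{r} A_l X_l B_l - E}_F^2 = \norm{\begin{bmatrix} Q_1 \\ Q_2 \end{bmatrix} x - e}_F^2,
\]
where $x = [\vec_{L_1}(\ov{X_1})^T, \ldots, \vec_{L_r}(\ov{X_r})^T]^T$ and $Q_1, Q_2, e$ are as in \eqref{eq25}. The key observation is that this identity is an \emph{equality of nonnegative quantities}, so the left-hand side vanishes (i.e.\ $X_l \in L_l(n_l,s_l)$ is an exact solution of \eqref{eq2}) if and only if the right-hand side vanishes (i.e.\ $x$ solves the consistent real system $\begin{bmatrix} Q_1 \\ Q_2 \end{bmatrix} x = e$). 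Thus \eqref{eq2} has an L-structure solution if and only if the real system is consistent, and the L-structure solutions are in bijection (via $x \mapsto (X_1,\ldots,X_r)$ with $\vec_{L_l}(\ov{X_l})$ the corresponding block of $x$, then $\vec(\ov{X_l}) = M_{L_l}\vec_{L_l}(\ov{X_l})$) with the solutions of that real system.

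First I would invoke part $(1)$ of Lemma \ref{lem4.1}: the real system $\begin{bmatrix} Q_1 \\ Q_2 \end{bmatrix} x = e$ is consistent if and only if $\begin{bmatrix} Q_1 \\ Q_2 \end{bmatrix}\begin{bmatrix} Q_1 \\ Q_2 \end{bmatrix}^+ e = e$, which is exactly \eqref{eq26}; and in that case the general solution is $x = [Q_1^+ - H^T Q_2 Q_1^+, H^T] e + (I - Q_1^+ Q_1 - RR^+) y$ for arbitrary $y$. Applying $\mathcal{T} = \diag(M_{L_1},\ldots,M_{L_r})$ on the left (which converts the stacked $\vec_{L_l}(\ov{X_l})$ into the stacked $\vec(\ov{X_l})$, by Lemma \ref{lem3.16}) yields the claimed formula for the general solution. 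Then I would again use Lemma \ref{lem4.1}$(1)$: under the consistency condition, uniqueness of $x$ holds if and only if $\begin{bmatrix} Q_1 \\ Q_2 \end{bmatrix}$ has full column rank, i.e.\ $\rank\begin{bmatrix} Q_1 \\ Q_2 \end{bmatrix}$ equals the length of $x$, which is exactly the dimension of the stacked vector $[\vec_{L_1}(\ov{X_1})^T,\ldots,\vec_{L_r}(\ov{X_r})^T]^T$; in that case $x = [Q_1^+ - H^T Q_2 Q_1^+, H^T] e$, and premultiplying by $\mathcal{T}$ gives the stated unique solution.

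The one point needing a little care is that the correspondence between L-structure tuples $(X_1,\ldots,X_r)$ and real vectors $x$ is genuinely a bijection, so that a unique $x$ forces a unique tuple and conversely. This is immediate: $M_{L_l}$ has full column rank (it is a basis matrix for $L_l^R(n_l,4s_l)$), so $\vec(\ov{X_l}) = M_{L_l}\vec_{L_l}(\ov{X_l})$ recovers $\vec_{L_l}(\ov{X_l})$ injectively, and by \eqref{eq7} and the definition of $\ov{X_l}$ the map $X_l \mapsto \vec(\ov{X_l})$ is injective as well; membership $X_l \in L_l(n_l,s_l)$ is precisely the condition $\vec(\ov{X_l})$ lies in the column space of $M_{L_l}$, i.e.\ that such an $x$-block exists. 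I do not anticipate a genuine obstacle here — the whole argument is a consistency-and-uniqueness bookkeeping exercise layered on top of the norm identity already established in Theorem \ref{thm4.1} — the only mild subtlety is making sure the dimension count in the uniqueness clause refers to the length of $\vec_{L_l}(\ov{X_l})$ (the reduced coordinates) and not of $\vec(\ov{X_l})$, which matches the statement as written.
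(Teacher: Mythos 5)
Your proposal is correct and follows essentially the same route as the paper: the paper's proof likewise reduces \eqref{eq2} with L-structure unknowns to the real system $\begin{bmatrix} Q_1 \\ Q_2 \end{bmatrix}x = e$ (stating the equivalence $\sum_{l}A_lX_lB_l = E \iff \begin{bmatrix} Q_1 \\ Q_2\end{bmatrix}x=e$ directly via the vectorization of Theorem \ref{thm4.1}) and then invokes Lemma \ref{lem4.1} for consistency, the general solution, and the full-column-rank uniqueness criterion, with $\mathcal{T}$ converting $\vec_{L_l}(\ov{X_l})$ back to $\vec(\ov{X_l})$. Your extra remark on the bijection between L-structure tuples and reduced coordinate vectors is a sound (if implicit in the paper) justification of the uniqueness transfer.
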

	\begin{proof}
	The proof follows using Lemma \ref{lem4.1} and from the fact that
	\begin{equation*}
		\sum_{l= 1}^{r}A_lX_lB_l= E \iff \begin{bmatrix}
			\Re(S_1)   &   \Re(S_2) & \cdots   & \Re(S_r)  \\
			\Im(S_1)   &   \Im(S_2)  & \cdots & \Im(S_r)
		\end{bmatrix}
		\begin{bmatrix}
			\vec_{L_{1}}(\ov{X_1})  \\
			\vec_{L_{2}}(\ov{X_2})   \\
			\vdots  \\
			\vec_{L_{r}}(\ov{X_r})
		\end{bmatrix} =
		\begin{bmatrix}
			\vec(\Re(\Psi_E))  \\
			\vec(\Im(\Psi_E))
		\end{bmatrix}.
	\end{equation*}
\end{proof}
The remaining subsection focuses on addressing the least squares problem associated with matrix equations \eqref{eq1} and \eqref{eq3}. This involves finding the least squares solutions for the following unconstrained real matrix system:
\begin{equation}\label{eq4.28}
	\begin{bmatrix}
		Q_1 \\
		Q_2
	\end{bmatrix} \vec_{L}(\ov{X}) = e.
\end{equation}
Let $M_{L}$ be the basis matrix of $L^R(n,4s)$. Using Lemma \ref{lem3.16}, we get $\vec(\ov{X})$ from $\vec_{L}(\ov{X})$ in the following way:
\[
\vec(\ov{X}) = M_L  \vec_{L}(\ov{X}).
\]
The methodology for solving RBMEs of the form \eqref{eq1} and \eqref{eq3} remains the same as outlined in Subsection \ref{sec4.2}. Therefore, our focus here is solely on presenting the values for $Q_1$, $Q_2$, and $e$, while intentionally omitting the detailed results.\\
\textbf{Linear matrix equation in one unknown L-structure}\\
Consider the matrix equation \eqref{eq1} and let $A_l= A_{l1}+ A_{l2}\j \in \QR ^{m \times n}, B_l \in \QR^{s \times t}$, $C_p= C_{p1}+ C_{p2}\j \in \QR^{m \times s}$, $D_p \in \QR^{n \times t}$, $E= E_1+ E_2\j \in \QR^{m \times t}$ for $l=1,2, \ldots, r$ and $p= 1, 2, \ldots, q$. Let
\begin{equation*}
	S:= \left(\sum_{l= 1}^{r}\left(h(B_l)^T \otimes A_{l1}+ h(B_l\j)^T \otimes A_{l2}\right)\right)\mathcal{W}_{ns}M_L,
\end{equation*}
\begin{equation*}
	N:= \left(\sum_{p= 1}^{q}\left(h(D_p)^T \otimes C_{p1}+ h(D_p\j)^T \otimes C_{p2}\right)\right) \mathcal{S}_{ns} \mathcal{W}_{ns}M_L.
\end{equation*}
$Q_1, Q_2$, and $e$ (as in \eqref{eq4.28}) for solving RBME of the form \eqref{eq1} are in the following form:
\begin{equation*}
	Q_1 := \Re(S) + \Re(N), \;  Q_2 :=  \Im(S)+ \Im(N), \; \mbox{and} \; 
	e := \begin{bmatrix}
		\vec(\Re(\Psi_{E}))  \\
		\vec(\Im(\Psi_{E}))
	\end{bmatrix}.
\end{equation*}
\textbf{Generalized coupled linear matrix equations in one unknown L-structure} \\
	Consider the matrix equation \eqref{eq3} and let $A_l= A_{l1}+ A_{l2}\j \in \QR^{m \times n}, B_l \in \QR^{s \times t}$, and $E_l =E_{l1}+ E_{l2}\j \in \QR^{m \times t}$ for $l= 1, 2, \ldots, r$. Let
\begin{equation*}
	T:= \begin{bmatrix}
		h(B_1)^T \otimes A_{11}+ h(B_1\j)^T \otimes A_{12} \\
		h(B_2)^T \otimes A_{21}+ h(B_2\j)^T \otimes A_{22} \\
		\vdots \\
		h(B_r)^T \otimes A_{r1}+ h(B_r\j)^T \otimes A_{r2} 
	\end{bmatrix}\mathcal{W}_{ns} M_{L}, \;  \;
	z := \begin{bmatrix}
		\vec(\Psi_{E_1}) \\
		\vec(\Psi_{E_2}) \\
		\vdots \\
		\vec(\Psi_{E_r}) 
	\end{bmatrix}.
\end{equation*}
$Q_1, Q_2$, and $e$ (as in \eqref{eq4.28}) for solving RBME of the form \eqref{eq3} are in the following form:
\begin{equation*}
	Q_1:= \Re(T), \; Q_2:= \Im(T), \; \mbox{and} \;  e:= \begin{bmatrix}
		\Re(z)  \\
		\Im(z)
	\end{bmatrix}.
\end{equation*}
	\section{Applications} \label{sec5}
	We now employ the framework developed in Section \ref{sec4} to specific cases and examine how our developed theory applies to various applications; including L-structure solutions to complex matrix equations, L-structure solutions to real matrix equations, PDIEP, and generalized PDIEP. 
\subsection{Solutions of matrix equation $AXB+CYD= E$ for $[X, Y] \in  \mathbb{H}\mathbb{C}^{n \times n} \times \mathbb{H}\mathbb{C}^{n \times n}$} \label{sec5.3}
	As a special case, we now discuss the Hankel solutions of the complex matrix equation 
	\begin{equation}\label{eq391}
		AXB+CYD= E,
	\end{equation}
\noindent
	where $A, C \in \C^{m \times n}$, $B, D \in \C^{n \times s}$, and $E \in \C^{m \times s}$. The following notations are required for solving matrix equation \eqref{eq391}. Set
		\begin{equation}\label{eq40}
		W := \left(B^T \otimes A\right) 
		\left[
			I_{n^2},   \i \, I_{n^2}
	\right]
		\begin{bmatrix}
			K_H    &       0     \\
			0         &      K_H
		\end{bmatrix},   \;
		J := \left(D^T \otimes C\right) 
	\left[
			I_{n^2},  \i \, I_{n^2}
	\right]
		\begin{bmatrix}
			K_H    &       0     \\
			0         &      K_H
		\end{bmatrix} .
	\end{equation}
Further $Q_1, Q_2, x$, and $e$ (as in \eqref{eq15}) are given in the form:
\begin{equation}\label{eq41}
	Q_1:= \left[
		\Re(W),  \Re(J)
	\right], \; 
	Q_2:= \left[
		\Im(W),  \Im(J)
	\right], \; 
	x:= \begin{bmatrix}
		\vec_H(\Re(X))   \\
		\vec_H(\Im(X))   \\
		\vec_H(\Re(Y))   \\
		\vec_H(\Im(Y))   
	\end{bmatrix}, \; \mbox{and} \; 
	e:= \begin{bmatrix}
		\vec(\Re(E)) \\
		\vec(\Im(E)) 
	\end{bmatrix}.
\end{equation}
Using  \eqref{eq13}, \eqref{eq40}, and Lemma \ref{lem3.10}, we have
\begin{eqnarray*}
   \vec(AXB) &=& (B^T \otimes A) \vec(X) \\
                     &=&  (B^T \otimes A) \left(\vec(\Re(X))+ \i \, \vec(\Im(X)) \right)\\
                     &=& (B^T \otimes A) 
                     	\left[
                     I_{n^2},  \i \, I_{n^2}
                     \right]
                     \begin{bmatrix}
                     	\vec(\Re(X)) \\
                     	\vec(\Im(X))
                     \end{bmatrix} \\
                 &=& (B^T \otimes A) 
              	\left[
              I_{n^2},   \i \, I_{n^2}
              \right]
                 \begin{bmatrix}
                 	K_H & 0         \\
                 	0      & K_H
                 \end{bmatrix}
                 \begin{bmatrix}
                 	\vec_H(\Re(X)) \\
                 	\vec_H(\Im(X))
                 \end{bmatrix} \\
             & =& W \begin{bmatrix}
             	\vec_H(\Re(X)) \\
             	\vec_H(\Im(X))
             \end{bmatrix}.
\end{eqnarray*}
Similarly,
\begin{equation*}
	\vec(CYD)= (D^T \otimes C) 
		\left[
	I_{n^2},  \i \, I_{n^2}
	\right]
	\begin{bmatrix}
		K_H & 0         \\
		0      & K_H
	\end{bmatrix}
	\begin{bmatrix}
		\vec_H(\Re(Y)) \\
		\vec_H(\Im(Y))
	\end{bmatrix} = J
	\begin{bmatrix}
		\vec_H(\Re(Y)) \\
		\vec_H(\Im(Y))
	\end{bmatrix}.
\end{equation*}
Using \eqref{eq41} and Lemma \ref{lem4.2}, we have
\begin{eqnarray*}
	AXB+CYD=E &\iff& \vec(AXB)+\vec(CYD)=\vec(E) \\
                           & \iff & W \begin{bmatrix}
                           	\vec_H(\Re(X)) \\
                           	\vec_H(\Im(X))
                           \end{bmatrix}+ J
                       \begin{bmatrix}
                       \vec_H(\Re(Y)) \\
                       \vec_H(\Im(Y))
                   \end{bmatrix} = \vec(E) \\
                   & \iff & \left[
                   	W,  J
                   \right]\begin{bmatrix}
                   \vec_H(\Re(X)) \\
                    \vec_H(\Im(X)) \\
                     \vec_H(\Re(Y)) \\
                     \vec_H(\Im(Y)) \\
               \end{bmatrix}  = \vec(E)\\
           & \iff & \begin{bmatrix}
           	\Re(W) & \Re(J) \\
           	\Im(W)  & \Im(J)
           \end{bmatrix}x = \begin{bmatrix}
           \vec(\Re(E)) \\
           \vec(\Im(E))
       \end{bmatrix} \\
   & \iff & \begin{bmatrix}
   	Q_1 \\
   	Q_2
   \end{bmatrix}x = e.
\end{eqnarray*}
Hence, matrix equation $AXB+CYD=E$ for $[X, Y] \in \mathbb{H}\mathbb{C}^{n \times n} \times \mathbb{H}\mathbb{C}^{n \times n}$ can be solved by solving the following unconstrained real matrix system: 
\begin{equation*}
 \begin{bmatrix}
 	Q_1  \\
 	Q_2
 \end{bmatrix}x= e.
\end{equation*}
 Using Lemma \ref{lem3.10}, we have
	\[
	\begin{bmatrix}
		\vec(\Re(X)) \\
		\vec(\Im(X)) \\
		\vec(\Re(Y)) \\
		\vec(\Im(Y)) 
	\end{bmatrix} = \begin{bmatrix}
	K_H & 0 & 0 & 0\\
	0      & K_H&0 & 0 \\
	0 & 0 & K_H & 0 \\
	0 & 0 & 0 & K_H
\end{bmatrix}x.
\] 

		\subsection{Solutions of matrix equation $AXB+CYD= E$ for $[X, Y] \in  \mathbb{S}\mathbb{T}\R^{n \times n} \times \mathbb{S}\mathbb{T}\R^{n \times n}$} \label{sec5.4}
	As a special case, we now discuss the symmetric Toeplitz solutions of the real matrix equation 
	\begin{equation}\label{eq39}
		AXB+CYD= E,
	\end{equation}
where $A, C \in \R^{m \times n}$, $B, D \in \R^{n \times s}$, and $E \in \R^{m \times s}$. Using Lemma \ref{lem3.91}, we have
\begin{eqnarray*}
	AXB+ CYD=E & \iff & \vec(AXB)+\vec(CYD)=\vec(E) \\
	            & \iff & \left(B^T \otimes A\right)\vec(X)+\left(D^T \otimes C\right)\vec(Y) = \vec(E) \\
	            & \iff &  \left(B^T \otimes A\right)K_{ST}\vec_{ST}(X) +  \left(D^T \otimes C\right)K_{ST}\vec_{ST}(Y) = \vec(E) \\
	            & \iff & \left[
	            	\left(B^T \otimes A\right)K_{ST},  \left(D^T \otimes C\right)K_{ST}
	            \right]\begin{bmatrix}
	               \vec_{ST}(X) \\
	               \vec_{ST}(Y) 
            \end{bmatrix} =  \vec(E).
\end{eqnarray*}
	Hence, matrix equation $AXB+CYD=E$ for $[X, Y] \in  \mathbb{S}\mathbb{T}\R^{n \times n} \times \mathbb{S}\mathbb{T}\R^{n \times n}$ can be solved by solving the following unconstrained real matrix system: 
	\begin{equation*}
		\widetilde{Q}x=\widetilde{e},
	\end{equation*}
where $\widetilde{Q}= \left[
	\left(B^T \otimes A\right)K_{ST},  \left(D^T \otimes C\right)K_{ST}
\right]$, $x=\begin{bmatrix}
\vec_{ST}(X) \\
\vec_{ST}(Y) 
\end{bmatrix}$, and $\widetilde{e}=\vec(E)$.
 Using Lemma \ref{lem3.91}, we have
\[
\begin{bmatrix}
	\vec(X) \\
	\vec(Y)
\end{bmatrix}=\begin{bmatrix}
   K_{ST} & 0 \\
   0 & K_{ST}
\end{bmatrix}x.
\]
\subsection{PDIEP and Generalized PDIEP}
	In this subsection, we aim to demonstrate the application of our developed framework in solving a range of inverse problems. Here, we develop a numerical solution methodology for the inverse problems in which the spectral constraints involve only a few eigenpair information rather than the entire spectrum. Mathematically, the problem statement is as follows: 
	\begin{problem}[PDIEP]\label{pb5.4}
		Given vectors $\{u_1,u_2,\ldots,u_k\} \subset \mathbb{F}^n$, values $\{\lambda_1, \lambda_2, \ldots, \lambda_k\} \subset \mathbb{F}$, and a set $\mathcal{L}$ of structured matrices, find a matrix $M \in \mathcal{L}$ such that
		\begin{equation*}
			Mu_i = \lambda_i u_i, \; \; \; \; i=1,2, \ldots, k,
		\end{equation*}
	where $\mathbb{F}$ represents the scalar field of either real $\R$ or complex $\C$.
	\end{problem}
	To simplify the discussion, we will use the matrix pair $\left(\Lambda, \Phi\right)$ to describe partial eigenpair information, where
\begin{equation}\label{eq45}
	\Lambda= \diag(\lambda_1,\lambda_2,\ldots,\lambda_k) \in \mathbb{F}^{k \times k} , \; \mbox{and} \; \Phi= [u_1,u_2,\ldots,u_k] \in \mathbb{F}^{n \times k} .
\end{equation}
\begin{remark}
	PDIEP can be written as $M\Phi=  \Phi \Lambda$. By using the transformations
	\begin{equation*}
		A= I_{n}, \; X= M, \; B= \Phi, \; \mbox{and} \; E= \Phi \Lambda, 
	\end{equation*}
	we can find solution to PDIEP by solving matrix equation $AXB= E$ for $X \in \mathcal{L}$.
\end{remark}
Next, we investigate generalized PDIEPs. In a nutshell, the problem is:
	\begin{problem}[Generalized PDIEP]\label{pb5.5}
	Given vectors $\{u_1,u_2,\ldots,u_k\} \subset \mathbb{F}^n$, values $\{\lambda_1, \lambda_2, \ldots, \lambda_k\} \subset \mathbb{F}$, and a set $\mathcal{L}$ of structured matrices, find pair of matrices $M, N \in \mathcal{L}$ such that
	\begin{equation*}
		Mu_i = \lambda_i N u_i, \; \; \; \; i=1,2, \ldots, k,
	\end{equation*}
	where $\mathbb{F}$ represents the scalar field of either real $\R$ or complex $\C$.
\end{problem}
\begin{remark}
	Generalized PDIEP can be written as $M\Phi=  N \Phi \Lambda$, where $\Lambda$ and $\Phi$ are as in \eqref{eq45}. By using the transformations
	\begin{equation*}
		A= I_n, \; X= M, \; B= \Phi, \; C= -I_n, \; Y= N, \; D= \Phi \Lambda, \; \mbox{and} \; E= 0, 
	\end{equation*}
	we can find solution to Generalized PDIEP by solving matrix equation $AXB+ CYD= E$ for $X, Y \in \mathcal{L}$.
\end{remark}
Though the primary emphasis of this paper is on inverse problems having symmetric Toeplitz or Hankel structures, the overall approach can be extended to encompass any structures where any set of linear relationships among matrix entries is permissible.
 \section{Numerical verification} \label{sec6}
	In this section, we present numerical examples to verify our findings. All calculations are performed on an Intel Core $i7-9700@3.00GHz/16GB$ computer using MATLAB $R2021b$ software. We now present an example to verify our method for finding the least squares solution of the RBME of the form \eqref{eq2}.
	\begin{exam} \label{ex6.1}
		Let 
		\begin{eqnarray*}
			A &=& rand(4,5)+ rand(4,5)\j, \; B= rand(5,7)+ rand(5,7)\j, \\
			C &=&  ones(4,5)+ rand(4,5)\j, \; D= rand(5,7)+ ones(5,7)\j. 
		\end{eqnarray*}
		Let $c_1= \left[\i , \, 2+\i , \, 0 , \, 1 , \, \i \right]$, $r_1= \left[\i , \, 0 , \, 2\i , \, 1  , \, 1+\i \right]$, $c_2= \left[1 , \, 3\i  , \, 2+3\i  , \, 1 , \, 0\right]$, and $r_2= \left[1 , \, 0  , \, 1 , \, \i  , \, 2\right]$. Define $$\widetilde{X}= \widetilde{X}_1+ \widetilde{X}_2\j,$$ where $\widetilde{X}_1= toeplitz(c_1, r_1)$ and $\widetilde{X}_2= toeplitz(c_2, r_2)$. 
		
		\noin
	Let $c_3= \left[2+\i , \, 4  , \, \i , \, 1+3\i  , \, 2\i \right]$, $r_3= \left[2+\i  , \, 7+6\i , \, 3+2\i , \, \i  , \, 1+\i \right]$, $c_4= \left[1+3\i , \, 3\i  , \, 2+3\i , \, 3 , \, 5+\i\right]$, and $r_4= \left[1+3\i  , \, 5  , \, 1+6\i  , \, 3+\i  , \, 2\i\right]$. Define $$\widetilde{Y}= \widetilde{Y}_1+ \widetilde{Y}_2\j,$$ where $\widetilde{Y}_1= toeplitz(c_3, r_3)$ and $\widetilde{Y}_2= toeplitz(c_4, r_4)$. Let $E= A\widetilde{X}B+C\widetilde{Y}D$. Hence, $\left[\widetilde{X}, \widetilde{Y}\right]$ is the least squares Toeplitz solution with the least norm of the RBME $AXB+CYD= E$.\\  Next, we take matrices $A, B, C, D$, and $E$ as input and apply Theorem \ref{thm4.1} to calculate the least squares Toeplitz solution with least norm of the RBME $AXB+CYD=E$. We obtain $X= X_1+ X_2\j$ and $Y= Y_1+ Y_2\j$, where
		\begin{eqnarray*}
			X_1 &=& 
		\begin{bmatrix}
			0+ 1\i   &  0 - 0\i    &  0+ 2\i   &  1- 0\i  	 &  1+ 1\i  \\
			2+ 1\i   &    0+ 1\i   &   0- 0\i   &  0+ 2\i &   1- 0\i  \\
			0- 0\i   &  2+ 1\i     &  0+ 1\i  &  0- 0\i   &  0+ 2\i   \\
			1+ 0\i    & 0- 0\i   &  2+ 1\i   &  0+ 1\i   &  	0- 0\i   \\
			0+ 1\i   &  1+ 0\i    &  0- 0\i   &  2+ 1\i    &  	0+ 1\i
		\end{bmatrix}, \;  X_2=
		\begin{bmatrix}
			1- 0\i    &  0+ 0\i   &   1- 0\i    & 0+ 1\i      &     2- 0\i   \\
			0+ 3\i   &  1- 0\i     & 0+ 0\i       & 1- 0\i   &     0+ 1\i   \\
			2+ 3\i   &   0+ 3\i    &  1- 0\i    &  0+ 0\i    &   1- 0\i   \\
			1+ 0\i   &  2+ 3\i    &  0+ 3\i     & 1- 0\i     &     0+ 0\i  \\
			0+ 0\i &  1+ 0\i      &  2+ 3\i     & 0+ 3\i     &     1- 0\i
		\end{bmatrix} , \\
		Y_1 &=& 
	\begin{bmatrix}
		2+ 1\i   &  7+6\i    &  3+ 2\i   &  0+1 \i  	 &  1+ 1\i  \\
		4+ 0\i   &   2+ 1\i   &  7+ 6\i   & 3+ 2\i &   0+ 1\i  \\
		0+ 1\i   &  4+ 0\i     &  2+ 1\i  &  7+ 6\i   &  3+ 2\i   \\
		1+ 3\i    & 0+ 1\i   &  4+ 0\i   &  2+ 1\i   &  	7+ 6\i   \\
		0+ 2\i   &  1+ 3\i    &  0+ 1\i   &  4+ 0\i    &  	2+ 1\i
	\end{bmatrix}, \;  Y_2=
	\begin{bmatrix}
		1+ 3\i    &  5+ 0\i   &   1+ 6\i    & 3+ 1\i      &     0+ 2\i   \\
		0+ 3\i   &  1+ 3\i     & 5+ 0\i       & 1+ 6\i   &     3+ 1\i   \\
		2+ 3\i   &   0+ 3\i    &  1+ 3\i    &  5+ 0\i    &   1+ 6\i   \\
		3+ 0\i   &  2+ 3\i    &  0+ 3\i     & 1+ 3\i     &     5+ 0\i  \\
		5+ 1\i &  3+ 0\i      &  2+ 3\i     & 0+ 3\i     &     1+ 3\i
	\end{bmatrix}.
\end{eqnarray*}
		Clearly, $X$ and $Y$ are reduced biquaternion Toeplitz matrices. We have $\epsilon= \norm{\left[X, Y\right]- \left[\widetilde{X}, \widetilde{Y}\right]}_F= 1. 7470 \times 10^{-13}$.
	\end{exam}
	From Example \ref{ex6.1}, we find that the error $\epsilon$ is in the order of $10^{-13}$ and is negligible. This demonstrates the effectiveness of our method in determining the structure-constrained least squares solution to the RBME of the form \eqref{eq2}. 
Next, we illustrate an example for finding the structure-constrained least squares solution to the RBME of the form \eqref{eq3}.
	\begin{exam} \label{ex6.2}
	Let 
	\begin{eqnarray*}
		A &=& ones(4,5)+ rand(4,5)\j, \; B= ones(5,7)+ rand(5,7)\j, \\
		C &=&  rand(4,5)+ rand(4,5)\j, \; D= ones(5,7)+ rand(5,7)\j. 
	\end{eqnarray*}
	Let $c_1= \left[3+\i , \, 2+4\i , \, 6+\i , \, 2+\i , \, 3\i \right]$, $r_1= \left[3\i , \, 7 , \, 3+2\i , \, 1+\i  , \, 9+\i \right]$, $c_2= \left[1+2\i  , \, 5+3\i  , \, 3\i , \, 1+7\i , \, 3\right]$, and $r_2= \left[3 , \, 1+\i , \, 2+8\i  , \, 2+\i  , \, 2+2\i\right]$. Define $$\widetilde{X}= \widetilde{X}_1+ \widetilde{X}_2\j,$$ where $\widetilde{X}_1= hankel(c_1, r_1)$ and $\widetilde{X}_2= hankel(c_2, r_2)$. Let $E= A\widetilde{X}B$ and $F=C\widetilde{X}D$. Hence, $\widetilde{X}$ is the least squares Hankel solution with the least norm of the RBME $\left(AXB, CXD\right)= \left(E, F\right)$. 
	
	Next, we take matrices $A, B, C, D, E$, and $F$ as input to calculate the least squares Hankel solution with least norm of the RBME $\left(AXB, CXD\right)= \left(E, F\right)$. We obtain $X= X_1+ X_2\j$, where
	\begin{equation*}
		X_1 = 
		\begin{bmatrix}
			3+ 1\i   &  2 + 4\i    &  6+ 1\i   &  2+ 1\i  	 &  0+ 3\i  \\
			2+ 4\i   &  6+ 1\i     &  2+ 1\i   &  0+ 3\i     &  7+ 0\i  \\
			6+ 1\i   &  2+ 1\i     &  0+ 3\i   &  7+ 0\i     &  3+ 2\i   \\
			2+ 1\i    & 0+ 3\i   &  7+ 0\i   &  3+ 2\i   &  	1+ 1\i   \\
			0+ 3\i   &  7+ 0\i    &  3+ 2\i   &  1+ 1\i    &  	9+ 1\i
		\end{bmatrix}, \;  X_2=
		\begin{bmatrix}
			1+ 2\i    &  5+ 3\i    &   0+ 3\i    & 1+ 7\i      &     3+ 0\i   \\
			5+ 3\i   &  0+ 3\i    & 1+ 7\i       & 3+ 0\i     &     1+ 1\i   \\
			0+ 3\i   &  1+ 7\i    &  3+ 0\i      & 1+ 1\i      &     2+ 8\i   \\
			1+ 7\i   &  3+ 0\i    &  1+ 1\i       & 2+ 8\i     &     2+ 1\i  \\
			3+ 0\i  &  1+ 1\i     &  2+ 8\i     & 2+ 1\i     &     2+2\i
		\end{bmatrix} . 
	\end{equation*}
	Clearly, $X$ is a reduced biquaternion Hankel matrix. We have $\epsilon= \norm{X-\widetilde{X}}_F= 5. 7042 \times 10^{-13}$.
\end{exam}
	From Example \ref{ex6.2}, we find that the error $\epsilon$ is in the order of $10^{-13}$ and is negligible. This demonstrates the effectiveness of our method in determining the structure-constrained least squares solution to the RBME of the form \eqref{eq3}.
	 
	Next, we will discuss Hankel PDIEPs \cite[Problem 5.1]{chu2005inverse}. Given a set of vectors $\left\{u_1, u_2, \ldots, u_k\right\} \subset \mathbb{C}^n$, where $k \geq 1$, and a set of  numbers $\left\{\lambda_1, \lambda_2, \ldots, \lambda_k\right\} \subset \mathbb{C}$, our aim is to construct a Hankel matrix $M \in \C^{n \times n}$ satisfying $Mu_i = \lambda_iu_i$ for $i=1,2,\ldots,k$. Now, we will illustrate this problem with an example.
	\begin{exam}\label{ex6.3}
		To establish test data, we first generate a Hankel matrix $M$. Let $c=\left[1+2\i, \, 2-4\i, \, -1+3\i, \, 4 \right]$ and $r=\left[4, \, 3+4\i, \, 2\i, \, 3 \right]$. Define $M=hankel(c,r)$.
 Let $\left(\Lambda, \Phi\right)$ denote its eigenpairs. We have $\Lambda= \diag(\lambda_1,\ldots, \lambda_4) \in \mathbb{C}^{4 \times 4}$ and $\Phi=\left[u_1, u_2, u_3, u_4\right] \in \mathbb{C}^{4 \times 4}$, where $$\left[\lambda_1, \lambda_2, \lambda_3, \lambda_4\right]=\left[-3.8029 + 7.9250\i, \, -2.7826 - 3.5629\i, \, 5.6954 - 1.0619\i, \, 6.8900 + 5.6998\i\right],$$ and their corresponding eigenvectors
			\[
		\begin{bmatrix}
			u_1 & u_2 & u_3 & u_4
			\end{bmatrix}=\begin{bmatrix}
			0.6240 + 0.0000\i & -0.4940 - 0.0377\i & -0.5395 - 0.2011\i& 	0.1572 - 0.2047\i \\
			-0.6145 - 0.0885\i& -0.5863 + 0.0219\i & 0.0172 - 0.1236\i  &  	0.4818 - 0.1113\i \\
			0.4246 + 0.0774\i &	0.1217 - 0.1368\i    &0.5855 + 0.0000\i &  	0.6784 + 0.0000\i \\
			-0.1893 + 0.0550\i &0.6138 + 0.0000\i&-0.5259 - 0.1832\i & 	0.4609 - 0.1275\i
		\end{bmatrix}.
		\]
		\textbf{Case $\mathbf{1}$.} Reconstruction from one eigenpair $(k=1)$:
		Let the prescribed partial eigeninformation be given by 
		\[
		\widetilde{\Lambda} = \lambda_3 \in \mathbb{C} \; \textrm{and} \; \; \widetilde{\Phi} = u_3 \in \mathbb{C}^{4 \times 1}.
		\]
			Construct the Hankel matrix $\widetilde{M}$ such that $\widetilde{M}u_i = \lambda_i u_i$ for $i=3$. By using the transformations $A= I_4, \; X= \widetilde{M}, \; B=\widetilde{\Phi}, \; \mbox{and} \; E= \widetilde{\Phi} \widetilde{\Lambda}$, we find the Hankel solution to the matrix equation $AXB=E$. We obtain
			\[
			\widetilde{M}=\begin{bmatrix}
				1.6614 + 0.3115\i & 1.0564 + 0.6597\i & -1.8088 + 0.4921\i &  2.6736 - 0.4763\i \\
				1.0564 + 0.6597\i & -1.8088 + 0.4921\i &  2.6736 - 0.4763\i &  2.0823 - 0.5222\i \\
			-1.8088 + 0.4921\i &  2.6736 - 0.4763\i &  2.0823 - 0.5222\i & -1.7415 + 0.7505\i \\
				2.6736 - 0.4763\i &  2.0823 - 0.5222\i & -1.7415 + 0.7505\i &  1.2459 + 0.2833\i
			\end{bmatrix}.
			\]
				Then, $\widetilde{M}$ is the desired Hankel matrix. 
				
				\noin
		\textbf{Case $\mathbf{2}$.} Reconstruction from two eigenpairs $(k=2)$:
		Let the prescribed partial eigeninformation be given by 
		\[
		\widetilde{\Lambda} = \diag(\lambda_2, \lambda_3) \in \mathbb{C}^{2 \times 2} \; \textrm{and} \; \; \widetilde{\Phi} = \left[u_2, u_3\right]\in \mathbb{C}^{4 \times 2}.
		\]
	Construct the Hankel matrix $\widetilde{M}$ such that $\widetilde{M}u_i = \lambda_i u_i$ for $i=2,3$. By using the transformations $A= I_4, \; X= \widetilde{M}, \; B=\widetilde{\Phi}, \; \mbox{and} \; E= \widetilde{\Phi} \widetilde{\Lambda}$, we find the Hankel solution to the matrix equation $AXB=E$. We obtain
	\[\widetilde{M}= \begin{bmatrix}
			1.0000 + 2.0000\i   & 2.0000 - 4.0000\i  & -1.0000 + 3.0000\i &  4.0000 - 0.0000\i \\
			2.0000 - 4.0000\i  & -1.0000 + 3.0000\i  & 4.0000 - 0.0000\i  & 3.0000 + 4.0000\i \\
			-1.0000 + 3.0000\i &  4.0000 - 0.0000\i  & 3.0000 + 4.0000\i  &-0.0000 + 2.0000\i \\
			4.0000 - 0.0000\i   & 3.0000 + 4.0000\i  & -0.0000 + 2.0000\i &  3.0000 + 0.0000\i 
		\end{bmatrix}.
	\]
	Then, $\widetilde{M}$ is the desired Hankel matrix. 
		\begin{table}[H]
		\centering
		\begin{tabular}{cccc}
			\toprule%
			\multicolumn{2}{c}{\textbf{Case $\mathbf{1}$ $(k=1)$}} & \multicolumn{2}{c}{\textbf{Case $\mathbf{2}$ $(k=2)$}} \\
			\cmidrule(lr){1-2}  \cmidrule(lr){3-4}%
			Eigenpair & Residual $\norm{\widetilde{M} u_i- \lambda_i u_i}_2$ & Eigenpairs & Residual $\norm{\widetilde{M} u_i- \lambda_i u_i}_2$ \\
			\midrule
			$\left(\lambda_3, u_3\right)$& $2.7792 \times 10^{-15}$  & $\left(\lambda_2, u_2\right)$   & $3.1349 \times 10^{-14}$  \\
			&  & $\left(\lambda_3, u_3\right)$  & $2.2761 \times 10^{-14}$\\ 
			\bottomrule
		\end{tabular}
		\caption{Residual $\norm{\widetilde{M} u_i- \lambda_i u_i}_2$ for Example \ref{ex6.3}} \label{tab1}
	\end{table}
	\end{exam}
From Table \ref{tab1}, we find that the residual $\norm{\widetilde{M} u_i- \lambda_i u_i}_2$, for $i=3$ in Case $1$ and for $i=2, 3$ in Case $2$, is in the order of $10^{-14}$ and is negligible. This demonstrates the effectiveness of our method in solving the Hankel PDIEP.

Next, we will discuss symmetric Toeplitz PDIEPs \cite[Problem 5.2]{chu2005inverse}. Given a set of real orthonormal vectors $\left\{u_1, u_2, \ldots, u_k\right\} \subset \mathbb{R}^n$, where $k \geq 1$, each symmetric or skew-symmetric, and a set of real numbers $\left\{\lambda_1, \lambda_2, \ldots, \lambda_k\right\} \subset \mathbb{R}$, our aim is to construct a symmetric Toeplitz matrix $T \in \R^{n \times n}$ satisfying $Tu_i = \lambda_iu_i$ for $i=1,2,\ldots,k$. It is important to note that a vector $u \in \R^n$ is called symmetric if $Ju=u$ and skew-symmetric if $Ju=-u$, where $J$ is the exchange matrix (a square matrix with ones on the anti-diagonal and zeros elsewhere). Now we illustrate this problem with an example.
		\begin{exam}\label{ex6.4}
		To establish test data, we first generate a real symmetric Toeplitz matrix $T$. Let $c=\left[5.30, \, 2.50, \, 4.60, \, -3.70,\, 2.80\right]$. Define $T=toeplitz(c)$. Let $\left(\Lambda, \Phi\right)$ denote its eigenpairs. We have $\Lambda= \diag(\lambda_1,\ldots, \lambda_5) \in \mathbb{R}^{5 \times 5}$ and $\Phi=\left[u_1, u_2, u_3, u_4, u_5\right] \in \mathbb{R}^{5 \times 5}$, where 
		$$\left[\lambda_1, \lambda_2, \lambda_3, \lambda_4, \lambda_5\right]=\left[-4.6650, \, -1.0842, \, 7.8650, \, 10.4951, \, 13.8891\right],$$ and their corresponding eigenvectors\\ 
		\[
		\begin{bmatrix}
			u_1& u_2& u_3& u_4& u_5
			\end{bmatrix}=\begin{bmatrix}
			0.4627 & 0.4077 & 0.5347 & -0.3460 & -0.4627 \\
			-0.5347 & 0.2169 & 0.4627 & 0.6165 & -0.2699 \\
			-0.0000 & -0.7573& 0.0000 & -0.0193 & -0.6528 \\
			0.5347 & 0.2169 & -0.4627 & 0.6165 & -0.2699 \\
			-0.4627 & 0.4077 & -0.5347 & 0.3460 & -0.4627
		\end{bmatrix}.
		\]
		\textbf{Case $\mathbf{1}$.} Reconstruction from two eigenpairs in which one eigenvector is odd and other is even $(k=2)$:
			Let the prescribed partial eigeninformation be given by 
		\[
		\widetilde{\Lambda} = \diag(\lambda_1, \lambda_2) \in \mathbb{R}^{2 \times 2} \; \textrm{and} \; \; \widetilde{\Phi} = \left[u_1, u_2\right]\in \mathbb{R}^{5 \times 2}.
		\]
		Construct the symmetric Toeplitz matrix $\widetilde{T}$ such that $\widetilde{T}u_i = \lambda_i u_i$ for $i=1,2$. By using the transformations $A= I_5, \; X= \widetilde{T}, \; B=\widetilde{\Phi}, \; \mbox{and} \; E= \widetilde{\Phi} \widetilde{\Lambda}$, we find the symmetric Toeplitz solution to the matrix equation $AXB=E$. We obtain
			$$ \widetilde{T}= \begin{bmatrix}
				5.30 & 2.50 & 4.60 & -3.70 & 2.80 \\
				2.50 & 5.30 & 2.50 & 4.60 & -3.70 \\
				4.60 & 2.50 & 5.30 & 2.50 & 4.60  \\
				-3.70 & 4.60 & 2.50 & 5.30 & 2.50 \\
				2.80 & -3.70 & 4.60 & 2.50 & 5.30
			\end{bmatrix}.$$
		Then, $\widetilde{T}$ is the desired symmetric Toeplitz matrix. 
		
	\noin
		\textbf{Case $\mathbf{2}$.} Reconstruction from two eigenpairs in which both eigenvectors are odd $(k=2)$: 	Let the prescribed partial eigeninformation be given by 
		\[
	\widetilde{\Lambda} = \diag(\lambda_1, \lambda_3) \in \mathbb{R}^{2 \times 2} \; \textrm{and} \; \; \widetilde{\Phi} = \left[u_1, u_3\right]\in \mathbb{R}^{5 \times 2}.
		\]
		Construct the symmetric Toeplitz matrix $\widetilde{T}$ such that $\widetilde{T}u_i = \lambda_i u_i$ for $i=1,3$. By using the transformations $A= I_5, \; X= \widetilde{T}, \; B=\widetilde{\Phi}, \; \mbox{and} \; E= \widetilde{\Phi} \widetilde{\Lambda}$, we find the symmetric Toeplitz solution to the matrix equation $AXB=E$. We obtain
		$$\; \; \; \widetilde{T}= \begin{bmatrix}
			1.0667 & 3.1000 & 0.3667 & -3.1000 & -1.4333 \\
			3.1000 & 1.0667 & 3.1000 & 0.3667 & -3.1000 \\
			0.3667 & 3.1000 & 1.0667 & 3.1000 & 0.3667  \\
			-3.1000 & 0.3667 & 3.1000 & 1.0667 & 3.1000 \\
			-1.4333 & -3.1000 & 0.3667 & 3.1000 & 1.0667
		\end{bmatrix}.$$
	Then, $\widetilde{T}$ is the desired symmetric Toeplitz matrix. 
		\begin{table}[H]
		\centering
		\begin{tabular}{cccc}
			\toprule%
			\multicolumn{2}{c}{\textbf{Case $\mathbf{1}$ $(k=2)$}} & \multicolumn{2}{c}{\textbf{Case $\mathbf{2}$ $(k=2)$}} \\
			\cmidrule(lr){1-2}  \cmidrule(lr){3-4}%
			Eigenpairs & Residual $\norm{\widetilde{T} u_i- \lambda_i u_i}_2$ & Eigenpairs & Residual $\norm{\widetilde{T} u_i- \lambda_i u_i}_2$ \\
			\midrule
			$\left(\lambda_1, u_1\right)$& $5.7430 \times 10^{-15}$  & $\left(\lambda_1, u_1\right)$   & $2.2505 \times 10^{-15}$  \\
			$\left(\lambda_2, u_2\right)$& $1.2200 \times 10^{-14}$ & $\left(\lambda_3, u_3\right)$  & $6.1218 \times 10^{-15}$\\ 
			\bottomrule
		\end{tabular}
		\caption{Residual $\norm{\widetilde{T} u_i- \lambda_i u_i}_2$ for Example \ref{ex6.4}} \label{tab2}
	\end{table}
	\end{exam}
From Table \ref{tab2}, we find that the residual $\norm{\widetilde{T} u_i- \lambda_i u_i}_2$, for $i=1,2$ in Case $1$ and for $i=1,3$ in Case $2$, is in the order of $10^{-14}$ and is negligible. This demonstrates the effectiveness of our method in solving the symmetric Toeplitz PDIEP.

Similar to PDIEP, one can solve the generalized PDIEP. We illustrate the generalized PDIEP for Hankel structure by the following example.
\begin{exam}\label{ex6.5}
		To establish test data, we first generate a linear matrix pencil $M-\lambda N$, where $M$ and $N$ are Hankel matrices. Let $c_1=\left[
			4+2\i, \,  2-4\i, \, -1+3\i, \, 4+3\i \right]$ and $r_1= \left[4+3\i, \, 4\i, \, 9+2\i, \, 3+\i\right]$. Define $M=hankel(c_1,r_1)$. Let $c_2=\left[3+2\i, \, 6-\i, \, -5+2\i, \, 4+7\i \right]$ and $r_2=\left[4+7\i, \, 3+4\i, \, 2+2\i, \, 3-8\i \right]$. Define $N=hankel(c_2,r_2)$.  Let $\left(\Lambda, \Phi\right)$ denote its eigenpairs. We have $\Lambda= \diag(\lambda_1,\ldots, \lambda_4) \in \mathbb{C}^{4 \times 4}$ and $\Phi=\left[u_1, u_2, u_3, u_4\right] \in \mathbb{C}^{4 \times 4}$, where $$\left[\lambda_1, \lambda_2, \lambda_3, \lambda_4\right]=\left[-0.3953+0.6027\i, \, 0.3708-0.7155\i, \, 0.6743-0.3655\i, \, 0.6761+0.1157\i \right],$$ and their corresponding eigenvectors
			\[ \begin{bmatrix}
				u_1& u_2& u_3& u_4
				\end{bmatrix} =
			\begin{bmatrix}
				-0.4881+0.1767\i & -0.4811 - 0.3552\i & -0.7739 + 0.1499\i & 0.7130 + 0.2870\i \\
				0.4383 + 0.4624\i&   0.4236 + 0.5764\i &-0.8976 + 0.1024\i&  0.1416 + 0.5177\i \\
			 0.4194 - 0.5806\i & -0.1700 + 0.0352\i & -0.3007 + 0.3084\i &  -0.3339 + 0.5007\i \\
			-0.5678 - 0.0875\i &  0.3392 - 0.1123\i & 0.0061 + 0.1882\i & -0.3560 - 0.2370\i 
			\end{bmatrix}.
		\]
			\textbf{Case $\mathbf{1}$.} Reconstruction from one eigenpair $(k=1)$:
			Let the prescribed partial eigeninformation be given by 
			\[
		\widetilde{\Lambda} = \lambda_1 \in \mathbb{C} \; \textrm{and} \; \; \widetilde{\Phi} = u_1 \in \mathbb{C}^{4 \times 1}.
			\]
		Construct the Hankel matrices $\widetilde{M}$ and $\widetilde{N}$ such that $\widetilde{M}u_i = \lambda_i \widetilde{N} u_i$ for $i=1$.	By using the transformations
		$A= I_4$, $X= \widetilde{M}$, $B= \widetilde{\Phi}$, $C= -I_4$, $Y= \widetilde{N}$, $D= \widetilde{\Phi} \widetilde{\Lambda}$, and $E= 0$, we find the Hankel solution to the matrix equation $AXB+CYD=E$. We obtain
		\begin{eqnarray*}
		\widetilde{M}&=& \begin{bmatrix}
		 1.0472 + 0.3406\i &   1.1937 + 0.5288\i & 0.8984 + 0.8802\i & 1.0875 + 1.1282\i  \\
		1.1937 + 0.5288\i &  0.8984 + 0.8802\i  & 1.0875 + 1.1282\i & 0.7748 + 1.0806\i   \\
		0.8984 + 0.8802\i & 1.0875 + 1.1282\i & 0.7748 + 1.0806\i & 0.6237 + 1.3399\i \\
		1.0875 + 1.1282\i & 0.7748 + 1.0806\i & 0.6237 + 1.3399\i & 0.3267 + 1.2860\i  
		\end{bmatrix}, \\
	\widetilde{N} &=& \begin{bmatrix}
		  1.4161 + 0.7678\i & 1.3606 + 0.9305\i & 1.0320 + 0.8914\i & 0.9574 + 1.1034\i   \\
		1.3606 + 0.9305\i & 1.0320 + 0.8914\i & 0.9574 + 1.1034\i & 0.8624 + 0.8961\i \\
		1.0320 + 0.8914\i & 0.9574 + 1.1034\i & 0.8624 + 0.8961\i & 0.6464 + 0.9076\i \\
		0.9574 + 1.1034\i & 0.8624 + 0.8961\i & 0.6464 + 0.9076\i & 0.5615 + 0.7072\i
	\end{bmatrix}.
		\end{eqnarray*}
		Then, $\widetilde{M}-\lambda \widetilde{N}$ is the desired Hankel matrix pencil. 
		
		\noin
			\textbf{Case $\mathbf{2}$.} Reconstruction from two eigenpairs $(k=2)$:
		Let the prescribed partial eigeninformation be given by 
		\[
		\widetilde{\Lambda} = \diag(\lambda_1, \lambda_3) \in \mathbb{C}^{2 \times 2} \; \textrm{and} \; \; \widetilde{\Phi} = \left[u_1, u_3\right]\in \mathbb{C}^{4 \times 2}.
		\]
		Construct the Hankel matrices $\widetilde{M}$ and $\widetilde{N}$ such that $\widetilde{M}u_i = \lambda_i \widetilde{N} u_i$ for $i=1, 3$.	By using the transformations
		$A= I_4$, $X= \widetilde{M}$, $B= \widetilde{\Phi}$, $C= -I_4$, $Y= \widetilde{N}$, $D= \widetilde{\Phi} \widetilde{\Lambda}$, and $E= 0$, we find the Hankel solution to the matrix equation $AXB+CYD=E$. We obtain
		\begin{eqnarray*}
			\widetilde{M} &= & \begin{bmatrix}
				  0.2460-0.0000\i & -0.0696-0.0231\i &  0.1118-0.0226\i & -0.0519+0.0436\i \\
				-0.0696-0.0231\i & 0.1118-0.0226\i & -0.0519+0.0436\i &  0.0299+0.1325\i \\
				0.1118-0.0226\i & -0.0519+0.0436\i &  0.0299+0.1325\i &  0.1243-0.0621\i \\
				-0.0519+0.0436\i & 0.0299+0.1325\i &  0.1243-0.0621\i &  0.0711+0.0777\i
			\end{bmatrix}, \\
		\widetilde{N} &= & \begin{bmatrix}
			   0.1767+0.0416\i & 0.1067-0.0146\i  & -0.0352+0.0850\i & 0.0696-0.0910\i  \\
			   0.1067-0.0146\i & -0.0352+0.0850\i & 0.0696-0.0910\i & -0.0943+0.1694\i \\
			-0.0352+0.0850\i & 0.0696-0.0910\i & -0.0943+0.1694\i & -0.0396+0.0850\i \\
			0.0696-0.0910\i & -0.0943+0.1694\i & -0.0396+0.0850\i & -0.0269+0.0197\i
		\end{bmatrix}.
		\end{eqnarray*}
		Then, $\widetilde{M}-\lambda \widetilde{N}$ is the desired Hankel matrix pencil. 
			\begin{table}[H]
			\centering
			\begin{tabular}{cccc}
				\toprule%
				\multicolumn{2}{c}{\textbf{Case $\mathbf{1}$ $(k=1)$}} & \multicolumn{2}{c}{\textbf{Case $\mathbf{2}$ $(k=2)$}} \\
				\cmidrule(lr){1-2}  \cmidrule(lr){3-4}%
				Eigenpair & Residual $\norm{\widetilde{M} u_i- \lambda_i\widetilde{N} u_i}_2$ & Eigenpairs & Residual $\norm{\widetilde{M} u_i- \lambda_i\widetilde{N} u_i}_2$ \\
				\midrule
				$\left(\lambda_1, u_1\right)$& $2.7626 \times 10^{-15}$  & $\left(\lambda_1, u_1\right)$   & $1.0906 \times 10^{-14}$  \\
				    &  & $\left(\lambda_3, u_3\right)$  & $2.7570 \times 10^{-15}$\\ 
				\bottomrule
			\end{tabular}
			\caption{Residual $\norm{\widetilde{M} u_i- \lambda_i\widetilde{N} u_i}_2$  for Example \ref{ex6.5}} \label{tab3}
		\end{table}
	\end{exam}
From Table \ref{tab3}, we find that the residual $\norm{\widetilde{M} u_i- \lambda_i\widetilde{N} u_i}_2$, for $i=1$ in Case $1$ and for $i=1,3$ in Case $2$, is in the order of $10^{-14}$ and is negligible. This demonstrates the effectiveness of our method in solving the generalized PDIEP for Hankel structure.

Next, we will illustrate an example of generalized PDIEP for symmetric Toeplitz structure.
\begin{exam}\label{ex6.6}
		To establish test data, we first generate a linear matrix pencil $M-\lambda N$, where $M$ and $N$ are symmetric Toeplitz matrices. Let $c_1=\left[7.80, \, 5.50, \, 3.70, \, -2.30, \, 8.90 \right]$. Define $M=toeplitz(c_1)$. Let $c_2=\left[4.20, \, 1.20, \, -3.50, \, 3.90, \, 9.80 \right]$. Define $N=toeplitz(c_2)$. Let $\left(\Lambda, \Phi\right)$ denote its eigenpairs. We have $\Lambda= \diag(\lambda_1,\ldots, \lambda_5) \in \mathbb{C}^{5 \times 5}$ and $\Phi=\left[u_1, u_2, u_3, u_4, u_5\right] \in \mathbb{C}^{5 \times 5}$, where $$\left[\lambda_1, \lambda_2, \lambda_3, \lambda_4, \lambda_5\right]=\left[4.1157, \, -1.7144, \, 0.2371, \, -0.1060 + 1.1336\i, \, -0.1060 - 1.1336\i \right],$$ and their corresponding eigenvectors
		\[ \begin{bmatrix}
			u_1& u_2& u_3& u_4& u_5
			\end{bmatrix}=\begin{bmatrix}
			-0.2481 &  0.3192  & -0.2773 &  -0.2700+0.7300\i & -0.2700 - 0.7300\i \\
			-0.4470 & -0.8953 & -0.4115 & 0.6140+0.1425\i  &   0.6140 - 0.1425\i  \\
			-1.0000  & 1.0000  & 1.0000  & -0.0000-0.0000\i & -0.0000 + 0.0000\i \\
			-0.4470  & -0.8953 & -0.4115 & -0.6140-0.1425\i & -0.6140 + 0.1425\i \\
			-0.2481  & 0.3192  & -0.2773  & 0.2700-0.7300\i & 0.2700 + 0.7300\i
		\end{bmatrix}.
		\]
		\textbf{Case $\mathbf{1}$.} Reconstruction from two eigenpairs $(k=2)$:
	Let the prescribed partial eigeninformation be given by 
	\[
\widetilde{\Lambda} = \diag(\lambda_1, \lambda_3) \in \mathbb{C}^{2 \times 2} \; \textrm{and} \; \; \widetilde{\Phi} = \left[u_1, u_3\right]\in \mathbb{C}^{5 \times 2}.
	\]
	Construct the symmetric Toeplitz matrices $\widetilde{M}$ and $\widetilde{N}$ such that $\widetilde{M}u_i = \lambda_i \widetilde{N} u_i$ for $i=1, 3$. By using the transformations
	$A= I_5$, $X= \widetilde{M}$, $B= \widetilde{\Phi}$, $C= -I_5$, $Y= \widetilde{N}$, $D= \widetilde{\Phi} \widetilde{\Lambda}$, and $E= 0$, we find the symmetric Toeplitz solution to the matrix equation $AXB+CYD=E$. We obtain
	\begin{eqnarray*}
		\widetilde{M} &=& \begin{bmatrix}
			1.3921&    1.0473 &   0.6772 &  -0.2032 &   0.6735 \\
			1.0473  &  1.3921  &  1.0473  &  0.6772   &-0.2032 \\
			0.6772   & 1.0473  &  1.3921   & 1.0473   & 0.6772 \\
			-0.2032  &  0.6772 &   1.0473 &   1.3921  &  1.0473 \\
			0.6735  & -0.2032  &  0.6772   & 1.0473   & 1.3921
		\end{bmatrix}, \\
	\widetilde{N} &=& \begin{bmatrix}
		0.6339 &   0.1905 &  -0.3161&    0.6055 &   0.7404  \\
		0.1905   & 0.6339   & 0.1905  & -0.3161  &  0.6055 \\
		-0.3161   & 0.1905   & 0.6339  &  0.1905 &  -0.3161 \\
		0.6055  & -0.3161   & 0.1905  &  0.6339  &  0.1905 \\
		0.7404   & 0.6055   &-0.3161  &  0.1905   & 0.6339
	\end{bmatrix}.
	\end{eqnarray*}
	Then, $\widetilde{M}-\lambda \widetilde{N}$ is the desired symmetric Toeplitz matrix pencil. 
	
		\noin
	\textbf{Case $\mathbf{2}$.} Reconstruction from three eigenpairs $(k=3)$:
	Let the prescribed partial eigeninformation be given by 
	\[
	\widetilde{\Lambda} = \diag(\lambda_1, \lambda_2, \lambda_3) \in \mathbb{C}^{3 \times 3} \; \textrm{and} \; \; \widetilde{\Phi} = \left[u_1, u_2, u_3\right]\in \mathbb{C}^{5 \times 3}.
	\]
	Construct the symmetric Toeplitz matrices $\widetilde{M}$ and $\widetilde{N}$ such that $\widetilde{M}u_i = \lambda_i \widetilde{N} u_i$ for $i=1, 2, 3$. By using the transformations
	$A= I_5$, $X= \widetilde{M}$, $B= \widetilde{\Phi}$, $C= -I_5$, $Y= \widetilde{N}$, $D= \widetilde{\Phi} \widetilde{\Lambda}$, and $E= 0$, we find the symmetric Toeplitz solution to the matrix equation $AXB+CYD=E$. We obtain
	\begin{eqnarray*}
		\widetilde{M} &=& \begin{bmatrix}
			  0.9214 &   0.6497 &    0.4371 &  -0.2717 &   1.0513 \\
			0.6497  &  0.9214  &  0.6497   & 0.4371  & -0.2717 \\
			0.4371  &  0.6497  &  0.9214   & 0.6497   & 0.4371 \\
			-0.2717  &  0.4371  &  0.6497   & 0.9214   & 0.6497 \\
			1.0513  & -0.2717  &  0.4371   & 0.6497   & 0.9214
		\end{bmatrix}, \\
	\widetilde{N} &=& \begin{bmatrix}
		 0.4961 &   0.1417 &   -0.4134  &  0.4607  &  1.1576 \\
		0.1417   & 0.4961   & 0.1417  & -0.4134   & 0.4607  \\
		-0.4134  &  0.1417  &  0.4961  &  0.1417  & -0.4134 \\
		0.4607  & -0.4134   & 0.1417   & 0.4961   & 0.1417 \\
		1.1576   & 0.4607  & -0.4134   & 0.1417   & 0.4961
		\end{bmatrix}.
	\end{eqnarray*}
	Then, $\widetilde{M}-\lambda \widetilde{N}$ is the desired symmetric Toeplitz matrix pencil. 
\begin{table}[H]
	\centering
	\begin{tabular}{cccc}
		\toprule%
		\multicolumn{2}{c}{\textbf{Case $\mathbf{1}$ $(k=2)$}} & \multicolumn{2}{c}{\textbf{Case $\mathbf{2}$ $(k=3)$}} \\
		\cmidrule(lr){1-2}  \cmidrule(lr){3-4}%
		Eigenpairs & Residual $\norm{\widetilde{M} u_i- \lambda_i\widetilde{N} u_i}_2$ & Eigenpairs & Residual $\norm{\widetilde{M} u_i- \lambda_i\widetilde{N} u_i}_2$ \\
		\midrule
		$\left(\lambda_1, u_1\right)$& $3.3675 \times 10^{-15}$  & $\left(\lambda_1, u_1\right)$   & $6.9900 \times 10^{-15}$  \\
		$\left(\lambda_3, u_3\right)$& $2.3481 \times 10^{-15}$ & $\left(\lambda_2, u_2\right)$  & $2.4962 \times 10^{-15}$\\ 
		& & $\left(\lambda_3, u_3\right)$& $2.5686 \times 10^{-15}$ \\
		\bottomrule
	\end{tabular}
	\caption{Residual $\norm{\widetilde{M} u_i- \lambda_i\widetilde{N} u_i}_2$ for Example \ref{ex6.6}} \label{tab4}
\end{table}
\end{exam}
From Table \ref{tab4}, we find that the residual $\norm{\widetilde{M} u_i- \lambda_i\widetilde{N} u_i}_2$, for $i=1,3$ in Case $1$ and for $i=1,2,3$ in Case $2$, is in the order of $10^{-15}$ and is negligible. This demonstrates the effectiveness of our method in solving the generalized PDIEP for symmetric Toeplitz structure.
	\section{Conclusions} \label{sec7}
	In this manuscript, we have examined several L-structure reduced biquaternion matrix sets, including reduced biquaternion Toeplitz, symmetric Toeplitz, Hankel, and circulant matrix sets. Next, we have proposed a generalized framework for finding the least squares L-structure solutions for the following RBMEs:
	\begin{eqnarray*}
			&&	\sum_{l=1}^{r}A_lX_lB_l= E,     \\   
	&&	\sum_{l=1}^{r}A_lXB_l+ \sum_{p=1}^{q}C_pX^TD_p = E,    \\
      &&	(A_1XB_1, A_2XB_2, \ldots, A_rXB_r)= (E_1, E_2, \ldots, E_r).           
\end{eqnarray*}         
	Lastly, we have discussed how our developed theory applies to various applications, including L-structure solutions to complex and real matrix equations, PDIEP, and generalized PDIEP. 
	\bibliography{p1new}

\begin{thebibliography}{10}

\bibitem{bulow2001hypercomplex}
Thomas B\"{u}low and Gerald Sommer.
\newblock Hypercomplex signals --- a novel extension of the analytic signal to
  the multidimensional case.
\newblock {\em IEEE Trans. Signal Process.}, 49(11):2844--2852, 2001.

\bibitem{carrasquinha2018image}
Eunice Carrasquinha, Conceicao Amado, Ana~M Pires, and Lina Oliveira.
\newblock Image reconstruction based on circulant matrices.
\newblock {\em Signal Processing: Image Communication}, 63:72--80, 2018.

\bibitem{chu1987singular}
King-wah~Eric Chu.
\newblock Singular value and generalized singular value decompositions and the
  solution of linear matrix equations.
\newblock {\em Linear Algebra Appl.}, 88/89:83--98, 1987.

\bibitem{chu2005inverse}
Moody Chu and Gene Golub.
\newblock {\em Inverse eigenvalue problems: theory, algorithms, and
  applications}.
\newblock OUP Oxford, 2005.

\bibitem{MR172890}
Randall~E. Cline.
\newblock Representations for the generalized inverse of a partitioned matrix.
\newblock {\em J. Soc. Indust. Appl. Math.}, 12:588--600, 1964.

\bibitem{datta2004numerical}
Biswa Datta.
\newblock {\em Numerical methods for linear control systems}, volume~1.
\newblock Academic Press, 2004.

\bibitem{flanders1977matrix}
Harley Flanders and Harald~K. Wimmer.
\newblock On the matrix equations {$AX-XB=C$} and {$AX-YB=C$}.
\newblock {\em SIAM J. Appl. Math.}, 32(4):707--710, 1977.

\bibitem{gantmacher2005applications}
Feliks~Rouminovich Gantmacher and Joel~Lee Brenner.
\newblock {\em Applications of the Theory of Matrices}.
\newblock Courier Corporation, 2005.

\bibitem{golub2013matrix}
Gene~H Golub and Charles~F Van~Loan.
\newblock {\em Matrix computations}.
\newblock JHU press, 2013.

\bibitem{jameson1973inverse}
Antony Jameson and Eliezer Kreindler.
\newblock Inverse problem of linear optimal control.
\newblock {\em SIAM J. Control}, 11:1--19, 1973.

\bibitem{jiang2003solutions}
Tongsong Jiang and Musheng Wei.
\newblock On solutions of the matrix equations {$X-AXB=C$} and {$X-A\overline X
  B=C$}.
\newblock {\em Linear Algebra Appl.}, 367:225--233, 2003.

\bibitem{liping1996matrix}
Huang Liping.
\newblock The matrix equation {$AXB-GXD=E$} over the quaternion field.
\newblock {\em Linear Algebra Appl.}, 234:197--208, 1996.

\bibitem{magnus1983structured}
Jan~R. Magnus.
\newblock {$L$}-structured matrices and linear matrix equations.
\newblock {\em Linear and Multilinear Algebra}, 14(1):67--88, 1983.

\bibitem{nandhini2019compressive}
S~Aasha Nandhini, S~Radha, P~Nirmala, and R~Kishore.
\newblock Compressive sensing for images using a variant of toeplitz matrix for
  wireless sensor networks.
\newblock {\em Journal of Real-Time Image Processing}, 16(5):1525--1540, 2019.

\bibitem{pei2004commutative}
Soo-Chang Pei, Ja-Han Chang, and Jian-Jiun Ding.
\newblock Commutative reduced biquaternions and their {F}ourier transform for
  signal and image processing applications.
\newblock {\em IEEE Trans. Signal Process.}, 52(7):2012--2031, 2004.

\bibitem{pei2008eigenvalues}
Soo-Chang Pei, Ja-Han Chang, Jian-Jiun Ding, and Ming-Yang Chen.
\newblock Eigenvalues and singular value decompositions of reduced biquaternion
  matrices.
\newblock {\em IEEE Trans. Circuits Syst. I. Regul. Pap.}, 55(9):2673--2685,
  2008.

\bibitem{yuan2020hermitian}
Shi-Fang Yuan, Yong Tian, and Ming-Zhao Li.
\newblock On {H}ermitian solutions of the reduced biquaternion matrix equation
  {$(AXB,CXD) = (E,G)$}.
\newblock {\em Linear Multilinear Algebra}, 68(7):1355--1373, 2020.

\bibitem{yuan2016structured}
Shi-Fang Yuan and Qing-Wen Wang.
\newblock L-structured quaternion matrices and quaternion linear matrix
  equations.
\newblock {\em Linear Multilinear Algebra}, 64(2):321--339, 2016.

\bibitem{yuan2013solutions}
Shi-Fang Yuan, Qing-Wen Wang, and Xue-Feng Duan.
\newblock On solutions of the quaternion matrix equation {$AX=B$} and their
  applications in color image restoration.
\newblock {\em Appl. Math. Comput.}, 221:10--20, 2013.

\bibitem{yuan2011least}
Shifang Yuan and Anping Liao.
\newblock Least squares solution of the quaternion matrix equation
  {$X-A\widehat{X}B=C$} with the least norm.
\newblock {\em Linear Multilinear Algebra}, 59(9):985--998, 2011.

\bibitem{zhang2020algebraic}
Dong Zhang, Zhenwei Guo, Gang Wang, and Tongsong Jiang.
\newblock Algebraic techniques for least squares problems in commutative
  quaternionic theory.
\newblock {\em Math. Methods Appl. Sci.}, 43(6):3513--3523, 2020.

\bibitem{zhang2015special}
Fengxia Zhang, Musheng Wei, Ying Li, and Jianli Zhao.
\newblock Special least squares solutions of the quaternion matrix equation
  {$AX=B$} with applications.
\newblock {\em Appl. Math. Comput.}, 270:425--433, 2015.

\bibitem{zhang2019correction}
Shuai Zhang and Meng Wang.
\newblock Correction of corrupted columns through fast robust {H}ankel matrix
  completion.
\newblock {\em IEEE Trans. Signal Process.}, 67(10):2580--2594, 2019.

\end{thebibliography}

\end{document}